\documentclass[11pt]{amsart}
\usepackage{amsfonts}
\usepackage{}
\usepackage{mathrsfs}
\usepackage{bbm}
\usepackage{amssymb}
\usepackage{indentfirst}
\usepackage{latexsym,amsfonts,amssymb,amsmath}
\setcounter{page}{1} \setlength{\textwidth}{14.6cm}
\setlength{\textheight}{22.5cm} \setlength{\evensidemargin}{0.8cm}
\setlength{\oddsidemargin}{0.8cm} \setlength{\topmargin}{0.8cm}

\newtheorem{theorem}{Theorem}[section]
\newtheorem{lemma}[theorem]{Lemma}

\theoremstyle{definition}
\newtheorem{definition}[theorem]{Definition}
\newtheorem{proposition}[theorem]{Proposition}
\theoremstyle{remark}
\newtheorem{remark}[theorem]{Remark}

\begin{document}

\title
{The construction of Hartman-Mycielski in topological gyrogroups}
\author{Ying-Ying Jin}\thanks{}
\address{(Y.Y. Jin) School of Mathematics and Computational Science, Wuyi University, Jiangmen 529020, P.R. China} \email{yingyjin@163.com}

\author{Li-Hong Xie*}\thanks{* The corresponding author.}
\address{(L.H. Xie) School of Mathematics and Computational Science, Wuyi University, Jiangmen 529020, P.R. China} \email{xielihong2011@aliyun.com}


\thanks{
This work is supported by Natural Science Foundation of China (Grant Nos. 11861018, 11871379, 11526158), the Natural Science Foundation of Guangdong
Province under Grant (No.2018A030313063), The Innovation Project of Department of Education of Guangdong Province  (No:2018KTSCX231), and key project of Natural Science Foundation of Guangdong Province Universities (2019KZDXM025).}

\subjclass[2010]{22A30, 22A22, 20N05, 54H99}

\keywords{topological gyrogroup; embedding of gyrogroup; gyrogroup extension; Pontrjagin conditions}

\begin{abstract}
 The concept of gyrogroups is a generalization of groups which
do not explicitly have associativity. Recently, Wattanapan et al consider the construction of Hartman-Mycielski in strongly topological gyrogroups. In this paper, we extend their results in topological gyrogroups. We mainly, among other results, prove that every Hausdorff topological gyrogroup $G$ can be
embedded as a closed subgyrogroup of a Hausdorff path-connected and locally path-connected topological
gyrogroup $G^\bullet$.
\end{abstract}

\maketitle

\section{Introduction}
The concept of gyrogroup was discovered by Ungar when he
studied the Einstein velocity addition \cite{Ung}.
A gyrogroup is a generalization of a group in the sense that it is a groupoid with an identity and inverses,
but the associative law is redefined by more general definitions which are the left gyroassociative law and
the left loop property.
The gyrogroup does not form a group since it is neither associative nor commutative.
Nevertheless, Ungar showed that gyrogroups are rich in algebraic structure and
encodes a group-like structure, namely the gyrogroup structures \cite{Ung}.
Many important characteristics of gyrogroups have been intensively studied in \cite{Fer1}, \cite{Fer2}, \cite{Suk1}, \cite{Suk2} and \cite{Suk3}.
Suksumran and Wiboonton have studied some basic algebraic properties of gyrogroups, for example, the isomorphism theorems, Cayley's Theorem, Lagrange's Theorem, the
gyrogroup actions, etc. in \cite{Suk1}, \cite{Suk2} and \cite{Suk3}.
Most of these properties are similar to those in classical group theory.
Atiponrat \cite{Atip} extended the idea of topological groups to topological gyrogroups as gyrogroups with a topology such that its binary operation is jointly continuous and the operation of taking the inverse is continuous.
Some basic properties of topological gyrogroups are studied in some detail; see, for instance \cite{Atip, Atip1, Cai}.

In 1958, Hartman and Mycielski proved that every Hausdorff topological group can be embedded into a Hausdorff path-connected, locally path-connected groups \cite{HM}. Recently, Wattanapan  et al extend this result to strongly topological gyrogroups \cite{Wat}.

{\bf Construction of Hartman-Mycielski \cite{Wat}:} Let $G$ be a gyrogroup with identity $e$ and let $J = [0, 1)$. A function $f : J\rightarrow G$ is a step function
if there are real numbers $a_0, a_1,..., a_n$ such that $0 = a_0 < a_1 < \cdot\cdot\cdot< a_n = 1$ and $f$ is constant on
$[a_k, a_{k+1})$ for all $k = 0, 1,..., n-1$.
 Henceforward, when we say that $A =\{a_0, a_1,..., a_n\}$ is a partition
of $J$, we include the condition that $0 = a_0 < a_1 < \cdot\cdot\cdot< a_n = 1$. Denote by $G^\bullet$ the set of all step
functions. Define an operation $\oplus$ on $G^\bullet$ by
$$( f \oplus g)(r) = f (r) \oplus g(r), r\in J\quad\quad\quad(1)$$
for all $f , g \in G^\bullet$. Let $f , g \in G^\bullet$. It is easy to see that $f \oplus g$ is again a step function.

\begin{theorem}\cite{Wat}\label{Th1}
$G^\bullet$ forms a gyrogroup in Construction of Hartman-Mycielski. If $G$ is a Hausdorff strongly topological gyrogroup, then $G^\bullet$ can become a Hausdorff path-connected, locally path-connected strongly topological gyrogroup containing $G$ as a closed subgyrogroup.
\end{theorem}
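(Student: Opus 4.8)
The plan is to separate the purely algebraic first assertion from the topological second one. For the algebra I would declare the constant function $\mathbf{e}$, $\mathbf{e}(r)=e$, to be the identity of $G^\bullet$, define the inverse pointwise by $(\ominus f)(r)=\ominus f(r)$, and define the candidate gyration by $(\mathrm{gyr}[f,g]h)(r)=\mathrm{gyr}[f(r),g(r)]h(r)$. Since any finitely many step functions are subordinate to a common refinement of their partitions, each of these operations again produces a step function, so $G^\bullet$ is closed under them. Every gyrogroup axiom — left identity, left inverse, the left gyroassociative law, the left loop property, and the assertion that each $\mathrm{gyr}[f,g]$ is an automorphism of $(G^\bullet,\oplus)$ — then collapses, coordinate by coordinate in $r\in J$, to the corresponding axiom holding in $G$. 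Thus this verification is entirely pointwise and routine, which settles the first sentence.

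For the topology I would fix the Lebesgue measure $\mu$ on $J$ together with a gyro-invariant neighbourhood base $\mathcal{U}$ at $e$, which exists precisely because $G$ is \emph{strongly} topological: for every $U\in\mathcal{U}$ and all $x,y\in G$ one has $\mathrm{gyr}[x,y](U)=U$. For $U\in\mathcal{U}$ and $\varepsilon>0$ set
$$W(U,\varepsilon)=\{\,f\in G^\bullet : \mu(\{r\in J : f(r)\notin U\})<\varepsilon\,\}.$$
I would check that the family $\{W(U,\varepsilon)\}$ satisfies the neighbourhood-base axioms at $\mathbf{e}$: it is closed under finite intersection, and given $W(U,\varepsilon)$ one chooses $V\in\mathcal{U}$ with $V\oplus V\subseteq U$, so that $W(V,\varepsilon/2)\oplus W(V,\varepsilon/2)\subseteq W(U,\varepsilon)$ because the set on which a product leaves $U$ is contained in the union of the two exceptional sets. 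Continuity of $\ominus$ is similar, after which left translations are homeomorphisms and the family generates a gyrogroup topology. The gyro-invariance of each $U$ transfers directly to the measure level, giving $\mathrm{gyr}[f,g](W(U,\varepsilon))=W(U,\varepsilon)$, so $G^\bullet$ is again strongly topological.

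The main obstacle is the joint continuity of $\oplus$ at an arbitrary pair $(f_{0},g_{0})$. Writing $f=f_{0}\oplus p$ and $g=g_{0}\oplus q$ with $p,q$ small and comparing $(f_{0}\oplus p)\oplus(g_{0}\oplus q)$ with $f_{0}\oplus g_{0}$, the nonassociativity produces translation-type terms and, crucially, gyration factors $\mathrm{gyr}[\cdot,\cdot]$ that cannot simply be cancelled as in the group case. The translation-type terms are controlled because the step functions $f_{0},g_{0}$ take only finitely many values, so one may intersect finitely many $V$'s; the gyration factors are exactly what the gyro-invariance of $\mathcal{U}$ absorbs, keeping the estimating neighbourhood inside $U$. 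Once $\oplus$ is continuous, Hausdorffness follows since two distinct step functions disagree on an interval of positive measure, which a suitable $W(U,\varepsilon)$ separates as soon as $G$ is Hausdorff; and $a\mapsto\bar a$, the constant function with value $a$, is readily seen to be an algebraic and topological embedding of $G$ onto the subgyrogroup of constant functions. This image is closed because a step function lying in its closure must, by the positive measure of each of its level intervals, be constant.

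Finally, for connectivity I would use the truncation homotopy: for $f\in G^\bullet$ put $\gamma_{f}(t)(r)=f(r)$ when $r<t$ and $\gamma_{f}(t)(r)=e$ when $r\ge t$. Then $\gamma_{f}(0)=\mathbf{e}$ and $\gamma_{f}(1)=f$, and $\gamma_{f}$ is continuous because $\gamma_{f}(t)$ and $\gamma_{f}(t')$ agree off an interval of length $|t-t'|$, so that $\ominus\gamma_{f}(t)\oplus\gamma_{f}(t')\in W(U,\varepsilon)$ whenever $|t-t'|<\varepsilon$; hence $G^\bullet$ is path-connected. Since $\gamma_{f}(t)(r)$ is always either $f(r)$ or $e\in U$, the whole path stays inside $W(U,\varepsilon)$ whenever $f$ does, so each basic $W(U,\varepsilon)$ is path-connected; transporting this base by the left-translation homeomorphisms $L_{f}$ yields a path-connected neighbourhood base at every point, giving local path-connectedness and completing the plan.
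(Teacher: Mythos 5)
Your outline is essentially correct, and it follows the same Hartman--Mycielski route as the source \cite{Wat} from which Theorem \ref{Th1} is quoted: pointwise gyrogroup structure on $G^\bullet$, the sets $O(U,\varepsilon)$ as a neighbourhood base at $e^\bullet$, the truncation path for (local) path-connectedness, and the ``two distinct values on intervals of positive measure'' argument for closedness of the constant functions. The one place where you genuinely diverge from the present paper is in how the gyration terms arising from nonassociativity are tamed. You invoke the gyro-invariant base ($\mathrm{gyr}[x,y](U)=U$), which is legitimate under the ``strongly topological'' hypothesis of Theorem \ref{Th1} and makes $\mathrm{gyr}[f,g]\bigl(O(U,\varepsilon)\bigr)=O(U,\varepsilon)$ for free; but this is exactly the crutch the paper is written to remove. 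The paper instead verifies the nine Pontrjagin-type conditions of Proposition \ref{the} for the family $\{O(U,\varepsilon)\}$, and the gyrations are controlled \emph{locally} through conditions (5) and (6): because a step function takes only finitely many values, one intersects finitely many $V_k$ with $\mathrm{gyr}[f(a_k),g(a_k)]V_k\subseteq U$, so no invariant base is needed. Your approach buys a shorter argument under a stronger hypothesis; the paper's buys the generalization to arbitrary Hausdorff topological gyrogroups. Two smaller remarks: (i) the joint continuity of $\oplus$ at an arbitrary pair, which you correctly identify as the main obstacle, is left at the level of a plan in your write-up --- the actual chain of inclusions (see Claim 4 in the proof of Proposition \ref{the} and item (6) of Theorem 3.1) is where conditions (3), (5), (6) all enter and should be written out; (ii) for Hausdorffness you still need the gyrogroup-adapted separation $x\notin U\boxminus U$ (condition (7)), not merely that $G$ is Hausdorff pointwise, since $\ominus y\oplus(x\oplus V)=(\ominus y\oplus x)\oplus\mathrm{gyr}[\ominus y,x]V$ produces a gyration that must be absorbed before the two basic neighbourhoods can be seen to be disjoint.
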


In this paper, we mainly consider Theorem \ref{Th1} in topological gyrogroups and prove that: If $G$ is a Hausdorff topological gyrogroup, then $G^\bullet$ can become a Hausdorff path-connected, locally path-connected topological gyrogroup containing $G$ as a closed subgyrogroup.

All spaces are not assumed to satisfy any separation axiom unless otherwise stated.

\section{Some basic facts and definitions}

In this section, some basic definitions and results are stated. Let $G$ be a nonempty set, and let $\oplus  : G  \times G \rightarrow G $ be a binary operation on $G $. Then the pair $(G, \oplus)$ is
called a {\it groupoid.}  A function $f$ from a groupoid $(G_1, \oplus_1)$ to a groupoid $(G_2, \oplus_2)$ is said to be
a groupoid homomorphism if $f(x_1\oplus_1 x_2)=f(x_1)\oplus_2 f(x_2)$ for any elements $x_1, x_2 \in G_1$.  In addition, a bijective
groupoid homomorphism from a groupoid $(G, \oplus)$ to itself will be called a groupoid automorphism. We will write $Aut (G, \oplus)$ for the set of all automorphisms of a groupoid $(G, \oplus)$.
\begin{definition}\cite{Ung}\label{Def:gyr}
 Let $(G, \oplus)$ be a nonempty groupoid. We say that $(G, \oplus)$ or just $G$
(when it is clear from the context) is a gyrogroup if the followings hold:
\begin{enumerate}
\item[($G1$)] There is an identity element $e \in G$ such that
$$e\oplus x=x=x\oplus e \text{~~~~~for all~~}x\in G.$$
\item[($G2$)] For each $x \in G $, there exists an {\it inverse element}  $\ominus x \in G$ such that
$$\ominus x\oplus x=e=x\oplus(\ominus x).$$
\item[($G3$)] For any $x, y \in G $, there exists an {\it gyroautomorphism} $\text{gyr}[ x, y ] \in Aut( G,  \oplus)$ such that
$$x\oplus (y\oplus z)=(x\oplus y)\oplus \text{gyr}[ x, y ](z)$$ for all $z \in G$;
\item[($G4$)] For any $x, y \in G$, $\text{gyr}[ x \oplus y, y ] = \text{gyr}[ x, y ]$.
\end{enumerate}
\end{definition}

In this paper, $\text{gyr}[a,b]V$ denotes $\{\text{gyr}[a,b]v: v\in V\}$.

The following Proposition \ref{Pro:gyr} below summarizes some algebraic properties of gyrogroups
\begin{proposition}(\cite{Ung1, Ung})\label{Pro:gyr}
Let $(G,\oplus)$ be a gyrogroup and $a,b,c\in G$. Then
\begin{enumerate}
\item[(1)] $\ominus(\ominus a)=a$; \hfill{Involution of inversion}
\item[(2)] $\ominus a\oplus(a\oplus b)=b$; \hfill{Left cancellation law}
\item[(3)] \text{gyr}$[a,b](c)=\ominus(a\oplus b)\oplus(a\oplus(b\oplus c))$; \hfill{Gyrator identity}
\item[(4)] $\ominus(a\oplus b)=\text{gyr}[a,b](\ominus b\ominus a)$;\hfill{\text{cf.~}$(ab)^{-1}=b^{-1}a^{-1}$}
\item[(5)] $(\ominus a\oplus b)\oplus \text{gyr}[\ominus a,b](\ominus b\oplus c)=\ominus a\oplus c$; \hfill{\text{cf.~}$(a^{-1}b)(b^{-1}c)=a^{-1}c$}
\item[(6)] $\text{gyr}[a,b]=\text{gyr}[\ominus b,\ominus a]$;\hfill{Even property}
\item[(7)] $\text{gyr}[a,b]=\text{gyr}^{-1}[b,a], \text{the inverse of gyr}[b,a]$. \hfill{Inversive symmetry}
\end{enumerate}
\end{proposition}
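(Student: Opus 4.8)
The whole proposition is driven by two ingredients supplied by the axioms: the left gyroassociative law $(G3)$ together with the left loop property $(G4)$, and the standing assumption that each $\text{gyr}[a,b]$ lies in $Aut(G,\oplus)$. The latter means $\text{gyr}[a,b]$ is a bijection satisfying $\text{gyr}[a,b](x\oplus y)=\text{gyr}[a,b](x)\oplus\text{gyr}[a,b](y)$; since the identity of a groupoid is unique, $\text{gyr}[a,b](e)=e$, and consequently $\text{gyr}[a,b](\ominus x)=\ominus\,\text{gyr}[a,b](x)$. The plan is to establish the left cancellation law $(2)$ first, because $(1)$, $(3)$, $(4)$ and $(5)$ then each drop out by a single substitution into $(G3)$ followed by one cancellation, while the even property $(6)$ and the inversive symmetry $(7)$ need only the same tools applied with more care.

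The crux is $(2)$, and the one delicate point is sidestepping an apparent circularity. Substituting $x=\ominus b$, $y=b$ into $(G3)$ and using $(G2)$ and $(G1)$ gives $\ominus b\oplus(b\oplus z)=(\ominus b\oplus b)\oplus\text{gyr}[\ominus b,b]z=\text{gyr}[\ominus b,b]z$, so the left translation $L_b\colon z\mapsto b\oplus z$, composed on the left with $L_{\ominus b}$, equals the bijection $\text{gyr}[\ominus b,b]$; hence $L_b$ is injective. Next, taking $x=e$ in $(G3)$ yields $b\oplus z=b\oplus\text{gyr}[e,b]z$ for all $z$, and injectivity of $L_b$ forces $\text{gyr}[e,b]=I$. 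Finally $(G4)$ with $x=\ominus a$, $y=a$ gives $\text{gyr}[\ominus a,a]=\text{gyr}[\ominus a\oplus a,a]=\text{gyr}[e,a]=I$, whence $\ominus a\oplus(a\oplus b)=\text{gyr}[\ominus a,a]b=b$. I expect this injectivity-from-bijectivity observation to be the main obstacle, since it is the only step that is not a direct rewrite.

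With $(2)$ in hand the middle identities are routine. For $(1)$, from $a\oplus\ominus a=e$ I apply $\ominus a\oplus(\cdot)$ and use $(2)$ and the two-sided identity to get $a=\ominus(\ominus a)$. For $(3)$ I rewrite $(G3)$ as $a\oplus(b\oplus c)=(a\oplus b)\oplus\text{gyr}[a,b]c$ and cancel $a\oplus b$ on the left. For $(5)$ I put $z=\ominus b\oplus c$ into $(G3)$ and simplify $b\oplus(\ominus b\oplus c)=c$ via $(1)$ and $(2)$. For $(4)$ I verify that $(a\oplus b)\oplus\text{gyr}[a,b](\ominus b\oplus\ominus a)=a\oplus\bigl(b\oplus(\ominus b\oplus\ominus a)\bigr)=a\oplus\ominus a=e$, so $\text{gyr}[a,b](\ominus b\oplus\ominus a)$ is the (unique) inverse of $a\oplus b$.

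The remaining identities $(6)$ and $(7)$ are the most delicate. The key device is the nested gyration identity obtained by expanding $a\oplus\bigl(b\oplus(c\oplus z)\bigr)$ in two groupings: handling the inner sum first versus applying $(G3)$ to $a,b$ first and using that $\text{gyr}[a,b]$ is an automorphism. Both expansions share the left factor $(a\oplus b)\oplus\text{gyr}[a,b]c$, so left cancellation $(2)$ strips it off and leaves
\[
\text{gyr}[a,b\oplus c]\,\text{gyr}[b,c]=\text{gyr}\bigl[a\oplus b,\text{gyr}[a,b]c\bigr]\,\text{gyr}[a,b].
\]
Specializing this composition law (for instance at $c=\ominus b$, together with $(G4)$ and the already-proved vanishing gyrations) yields the inversive symmetry $\text{gyr}[a,b]=\text{gyr}^{-1}[b,a]$, and combining $(7)$ with the gyrosum inversion $(4)$ and the fact that automorphisms commute with $\ominus$ gives the even property. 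I expect only careful index bookkeeping, not any new idea, to be required here.
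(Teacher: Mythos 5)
The paper does not actually prove this proposition: it is quoted from Ungar's books \cite{Ung1, Ung}, so there is no in-paper argument to compare against and your proposal must stand on its own. Items (1)--(5) do stand: the observation that $L_{\ominus b}\circ L_b=\mathrm{gyr}[\ominus b,b]$ is a bijection, hence $L_b$ is injective, hence $\mathrm{gyr}[e,b]=I$ and (via (G4)) $\mathrm{gyr}[\ominus a,a]=I$, is exactly the right way to get the left cancellation law under this paper's two-sided version of the axioms, and (1), (3), (4), (5) then follow as you say. The gyroautomorphism composition law you derive for (6)--(7) is also correct.

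The gap is in the last step, and it is a real one, not ``index bookkeeping.'' Specializing the composition law at $c=\ominus b$ and using the vanishing gyrations gives $I=\mathrm{gyr}\bigl[a\oplus b,\mathrm{gyr}[a,b](\ominus b)\bigr]\,\mathrm{gyr}[a,b]$; since $(a\oplus b)\oplus\mathrm{gyr}[a,b](\ominus b)=a$, the left loop property (G4) turns this into $\mathrm{gyr}^{-1}[a,b]=\mathrm{gyr}\bigl[a,\mathrm{gyr}[a,b](\ominus b)\bigr]$. Equally, specializing at $a=\ominus b$ gives $\mathrm{gyr}^{-1}[b,c]=\mathrm{gyr}[\ominus b,b\oplus c]=\mathrm{gyr}[c,b\oplus c]$. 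Both are true identities, but neither is $\mathrm{gyr}[b,a]$ (resp.\ $\mathrm{gyr}[c,b]$): closing the gap requires the \emph{right} loop property $\mathrm{gyr}[c,b\oplus c]=\mathrm{gyr}[c,b]$, which in Ungar's development is itself \emph{deduced from} the inversive symmetry (7) together with (G4), so the route as sketched is circular. Likewise your plan for (6) fails quantitatively: combining (7) with the gyrosum inversion (4) and $\mathrm{gyr}[a,b](\ominus x)=\ominus\mathrm{gyr}[a,b](x)$ only shows that the automorphism $\mathrm{gyr}[a,b]\,\mathrm{gyr}[\ominus b,\ominus a]$ fixes the single element $a\oplus b$, which does not force it to be the identity map. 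A genuinely new ingredient is needed here; Ungar obtains (6) and (7) by first proving that the gyrosemidirect product $G\times\mathrm{Aut}(G)$ with $(x,X)(y,Y)=(x\oplus Xy,\ \mathrm{gyr}[x,Xy]XY)$ is a group and then reading off these identities from the uniqueness of inverses in that group. You should either import that construction or find an honest direct derivation of the right loop property before claiming (6) and (7).
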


\begin{definition}\cite{Ung1}\label{De2.3}
Let $(G,\oplus)$ be a gyrogroup with gyrogroup operation (or,
addition) $\oplus$. The gyrogroup cooperation (or, coaddition) $\boxplus$ is a second
binary operation in $G$ given by the equation
$$(\divideontimes)~~~~a\boxplus b=a\oplus \text{gyr}[a,\ominus b]b$$ for all $a, b\in G$.
The groupoid $(G, \boxplus)$ is called a cogyrogroup, and is said to
be the cogyrogroup associated with the gyrogroup $(G, \oplus)$.

Replacing $b$ by $\ominus b$ in $(\divideontimes)$, along with Identity $(\divideontimes)$ we have the identity
$$a\boxminus b=a\ominus \text{gyr}[a,b]b$$ for all $a, b\in G$, where we use the obvious notation, $a\boxminus b = a\boxplus(\ominus b)$.
\end{definition}

\begin{theorem}\label{The1}\cite{Ung1}
Let $(G,\oplus)$ be a gyrogroup with cooperation $\boxplus$ given by Definition \ref{De2.3}.
 Then,
 \begin{enumerate}
\item[(1)] $a\oplus(\ominus a\oplus b)=b$; \hfill{Left cancellation law}
\item[(2)] $(b\ominus a)\boxplus a=b$; \hfill{(First) Right Cancellation Law}
\item[(3)] $(b\boxminus a)\oplus a=b$.\hfill{(Second) Right Cancellation Law}
\end{enumerate}
\end{theorem}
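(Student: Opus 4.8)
The plan is to derive each of the three identities directly from the gyrogroup axioms $(G1)$--$(G4)$, the algebraic identities collected in Proposition \ref{Pro:gyr}, and the definition of the cooperation in Definition \ref{De2.3}, treating the three parts in increasing order of difficulty. Part (1) is essentially a relabelling: the left cancellation law of Proposition \ref{Pro:gyr}(2) reads $\ominus a\oplus(a\oplus b)=b$, so replacing $a$ by $\ominus a$ and invoking the involution of inversion $\ominus(\ominus a)=a$ from Proposition \ref{Pro:gyr}(1) yields $a\oplus(\ominus a\oplus b)=b$ at once.

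For part (2) I would unfold the cooperation and then use the left loop property. By Definition \ref{De2.3}, $(b\ominus a)\boxplus a=(b\ominus a)\oplus\text{gyr}[b\ominus a,\ominus a]\,a$. The key step is to apply axiom $(G4)$ with first argument $b$ and second argument $\ominus a$, which gives $\text{gyr}[b\oplus(\ominus a),\ominus a]=\text{gyr}[b,\ominus a]$ and hence simplifies the gyration to $\text{gyr}[b,\ominus a]$. Then the left gyroassociative law $(G3)$, read from right to left with $z=a$, collapses $(b\oplus\ominus a)\oplus\text{gyr}[b,\ominus a]\,a$ into $b\oplus(\ominus a\oplus a)=b\oplus e=b$ by $(G2)$ and $(G1)$. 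This is a clean four-line computation.

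Part (3) is where I expect the real difficulty. Expanding the cosubtraction via Definition \ref{De2.3} gives $b\boxminus a=b\ominus\text{gyr}[b,a]\,a$, so the claim becomes $(b\ominus\text{gyr}[b,a]\,a)\oplus a=b$. Unlike in (2), axiom $(G4)$ does not apply directly, because the element being subtracted, namely $\text{gyr}[b,a]\,a$, is not the summand that reappears on the right. Writing $p=\text{gyr}[b,a]\,a$ and noting that $(G3)$ together with $(G2)$ already gives the generic identity $(b\ominus p)\oplus\text{gyr}[b,\ominus p]\,p=b$, injectivity of the left translation by $b\ominus p$ reduces the goal to the nested gyration identity $\text{gyr}[b,\ominus\text{gyr}[b,a]\,a]\,\text{gyr}[b,a]\,a=a$. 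I would establish this by combining the gyrator identity of Proposition \ref{Pro:gyr}(3) with the even property \ref{Pro:gyr}(6) and inversive symmetry \ref{Pro:gyr}(7), which is exactly the bookkeeping that the inner gyration forces. An alternative, possibly cleaner, route is a duality argument: replacing $a$ by $\ominus a$ in part (2) first yields $(b\oplus a)\boxminus a=b$, so cosubtraction by $a$ is a left inverse of right translation by $a$; substituting $b\boxminus a$ for $b$ then gives $\bigl((b\boxminus a)\oplus a\bigr)\boxminus a=b\boxminus a$, whence the conclusion follows once one checks that $x\mapsto x\boxminus a$ is injective. Either way, reconciling the gyration carried by the inner cosubtraction with the outer addition is the main obstacle, while parts (1) and (2) are routine.
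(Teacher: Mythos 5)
The paper offers no proof of this theorem at all --- it is quoted from Ungar's book \cite{Ung1} --- so your attempt can only be judged on its own terms. Parts (1) and (2) are correct and complete: (1) really is Proposition \ref{Pro:gyr}(2) with $a$ replaced by $\ominus a$ together with involution of inversion, and (2) is the standard chain $b=b\oplus(\ominus a\oplus a)=(b\ominus a)\oplus\text{gyr}[b,\ominus a]a=(b\ominus a)\oplus\text{gyr}[b\ominus a,\ominus a]a=(b\ominus a)\boxplus a$ using $(G3)$, $(G2)$ and the left loop property $(G4)$.

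Part (3) is not finished, and the step you leave open is the entire content of the statement. Your reduction to the nested identity $\text{gyr}[b,\ominus\text{gyr}[b,a]a]\,\text{gyr}[b,a]\,a=a$ is correct, but this does not follow by ``bookkeeping'' with Proposition \ref{Pro:gyr}(3),(6),(7): writing $p=\text{gyr}[b,a]a$, the gyrator identity turns the left-hand side into $\ominus(b\ominus p)\oplus b=\ominus(b\boxminus a)\oplus b$, and the cogyroautomorphic inverse property (Theorem \ref{The2}) turns the required equality into $(a\boxminus b)\oplus b=a$ --- statement (3) with the roles of $a$ and $b$ swapped. So the ``bookkeeping'' is circular; the nested gyroautomorphism identity is a genuine theorem in Ungar's development and needs its own argument. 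Your alternative route has the same hole in different clothing: the identity $(b\oplus a)\boxminus a=b$ only shows that $\rho_a:x\mapsto x\oplus a$ is injective and $\beta_a:x\mapsto x\boxminus a$ is surjective, whereas the injectivity of $\beta_a$ that you defer is exactly equivalent to (3) and does not come for free. It can, however, be closed with tools already stated in the paper: if $b\boxminus a=b'\boxminus a$, apply $\ominus$ and Theorem \ref{The2} to get $a\boxplus(\ominus b)=a\boxplus(\ominus b')$; by the factorization in Lemma \ref{LEM2.8}, $y\mapsto a\boxplus y$ equals $L_a\circ R_a\circ\ominus\circ L_a\circ\ominus$, which is injective because $L_a$ and $\ominus$ are bijections and $R_a$ is injective by $(b\oplus a)\boxminus a=b$; hence $b=b'$, $\beta_a$ is injective, and composing $\beta_a\circ\rho_a=\mathrm{id}$ with this injectivity yields $\rho_a\circ\beta_a=\mathrm{id}$, which is (3). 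Without some such argument (or an explicit proof of the nested gyroautomorphism identity), part (3) remains unproved.
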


\begin{theorem}\label{The2}\cite{Ung1}
 Any gyrogroup $(G,\oplus)$ possesses the cogyroautomorphic inverse property,
$$\ominus(a\boxplus b)=(\ominus b)\boxplus(\ominus a)$$ for any $a, b\in G$.
\end{theorem}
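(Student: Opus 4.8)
The plan is to prove the identity $\ominus(a\boxplus b)=\ominus b\boxplus\ominus a$ without ever expanding the cooperation into its gyration form $a\boxplus b=a\oplus\text{gyr}[a,\ominus b]b$: a direct attack through Definition \ref{De2.3} and Proposition \ref{Pro:gyr}(4) quickly produces nested gyrations such as $\text{gyr}[a,\text{gyr}[a,\ominus b]b]$ that there is no listed identity to simplify. Instead I would route the whole argument through the cancellation laws of Theorem \ref{The1}. The starting point is to rewrite the right-hand side with the cooperation-minus notation of Definition \ref{De2.3}: since $x\boxminus y=x\boxplus(\ominus y)$, we have $\ominus b\boxplus\ominus a=\ominus b\boxminus a$. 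Thus the theorem is equivalent to $\ominus(a\boxplus b)=\ominus b\boxminus a$, and I would prove this by showing that both sides are the unique solution $x$ of the equation $x\oplus a=\ominus b$.

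That $\ominus b\boxminus a$ solves this equation is immediate from the Second Right Cancellation Law, Theorem \ref{The1}(3), applied with $b$ replaced by $\ominus b$: it gives $(\ominus b\boxminus a)\oplus a=\ominus b$. To obtain uniqueness I would check that right translation $x\mapsto x\oplus a$ is injective, and this also comes for free from the cancellation laws: replacing $a$ by $\ominus a$ in the First Right Cancellation Law, Theorem \ref{The1}(2), and using the involution $\ominus(\ominus a)=a$ of Proposition \ref{Pro:gyr}(1), yields $(b\oplus a)\boxplus(\ominus a)=b$ for every $b$. Hence $y\mapsto y\boxplus(\ominus a)$ is a left inverse of $x\mapsto x\oplus a$, so the latter is injective and the solution of $x\oplus a=\ominus b$ is unique.

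It then remains to show that the left-hand side solves the same equation, that is, $\ominus(a\boxplus b)\oplus a=\ominus b$. I would obtain this in two short moves. First, the analogous substitution $a\mapsto\ominus a$ in Theorem \ref{The1}(3), again with Proposition \ref{Pro:gyr}(1), gives $(b\boxplus a)\ominus a=b$ for all $a,b\in G$, which after interchanging the names of $a$ and $b$ reads $(a\boxplus b)\ominus b=a$. Writing $c=a\boxplus b$, this says $c\oplus(\ominus b)=a$; left-translating both sides by $\ominus c$ and simplifying the left side by the Left Cancellation Law (Theorem \ref{The1}(1), together with $\ominus(\ominus c)=c$ from Proposition \ref{Pro:gyr}(1)) gives $\ominus b=\ominus c\oplus a$, that is, $\ominus(a\boxplus b)\oplus a=\ominus b$. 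Comparing with the previous paragraph and invoking uniqueness yields $\ominus(a\boxplus b)=\ominus b\boxminus a=\ominus b\boxplus\ominus a$.

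I expect the main obstacle to be conceptual rather than computational: recognizing that one should resist unfolding $\boxplus$ via gyrations and instead exploit that the two right cancellation laws already encode the bijectivity of right translation. Once that is seen, the only thing demanding care is the bookkeeping of the sign substitutions $a\mapsto\ominus a$ combined with repeated use of $\ominus(\ominus a)=a$, since a single mis-substitution silently interchanges $\boxplus$ with $\boxminus$ or swaps the roles of $a$ and $b$; the injectivity step is the one I would write out most carefully, as uniqueness of solutions is precisely what licenses concluding the desired equality from the two expressions having the same image under $x\mapsto x\oplus a$.
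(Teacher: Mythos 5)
Your proof is correct. Note first that the paper does not actually prove this statement: it is quoted from Ungar's book \cite{Ung1} without proof, so there is no in-paper argument to match yours against. Checking your steps: the reduction of the claim to the equation $x\oplus a=\ominus b$ is sound; Theorem \ref{The1}(3) with $b$ replaced by $\ominus b$ gives $(\ominus b\boxminus a)\oplus a=\ominus b$; the substitution $a\mapsto\ominus a$ in Theorem \ref{The1}(2) together with $\ominus(\ominus a)=a$ gives $(x\oplus a)\boxplus(\ominus a)=x$, which indeed makes $x\mapsto x\oplus a$ injective; and the substitution $a\mapsto\ominus a$ in Theorem \ref{The1}(3) followed by left cancellation yields $\ominus(a\boxplus b)\oplus a=\ominus b$, so uniqueness closes the argument. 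This is a genuinely different route from the standard one in \cite{Ung1}, which unfolds $a\boxplus b=a\oplus\mathrm{gyr}[a,\ominus b]b$ and works through the gyrosum inversion $\ominus(a\oplus b)=\mathrm{gyr}[a,b](\ominus b\ominus a)$ and auxiliary gyration identities not listed in this paper. Your version buys self-containedness: it uses only the cancellation laws the paper has already recorded in Theorem \ref{The1} and Proposition \ref{Pro:gyr}(1), at the mild cost of an indirect uniqueness argument rather than a direct computation. The one point worth stating explicitly if you write this up is the injectivity step, since that is where the conclusion is actually licensed; as written, your appeal to the left inverse $y\mapsto y\boxplus(\ominus a)$ does this correctly.
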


Atiponrat \cite{Atip} extended the idea of topological groups to topological gyrogroups as following:
\begin{definition}\cite{Atip}
A triple $( G, \tau,  \oplus)$ is called a {\it topological gyrogroup} if and only if
\begin{enumerate}
\item[(1)] $( G, \tau)$ is a topological space;
\item[(2)] $( G, \oplus)$ is a gyrogroup;
\item[(3)] The binary operation  $\oplus: G  \times G  \rightarrow G$ is continuous where $G \times G$ is endowed with the product topology
and the operation of taking the inverse $\ominus(\cdot ) : G  \rightarrow G $, i.e. $x \rightarrow \ominus x$, is continuous.
\end{enumerate}
\end{definition}

If a triple $( G, \tau,  \oplus)$ satisfies the first two conditions and its binary operation is continuous, we call such
triple a {\it paratopological gyrogroup} \cite{Atip1}. Sometimes we will just say that $G$ is a topological gyrogroup (paratopological gyrogroup) if the binary operation and the topology are clear from the context. A topological gyrogroup $G$ is strong if there exists an open base $\mathcal {U}$ at the identity $e$
of $G$ such that gyr$[x, y](U) = U$ for all $x, y \in G$, $U \in \mathcal {U}$. In this case, we say that $G$ is a {\it strongly topological
gyrogroup} \cite{Bao} with an open base $\mathcal {U}$ at $e$. Clearly, every strongly topological gyrogroup is a topological gyrogroup.

\begin{proposition}\label{Pro1}\cite{Atip1}
Let $G$ be a paratopological gyrogroup and $A$ be an open set. Then $B\oplus A$ is open for each $B\subseteq G$.
\end{proposition}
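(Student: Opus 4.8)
The plan is to reduce the claim to the elementary fact that, in a paratopological gyrogroup, every left translation by a fixed element is a homeomorphism, and then to express $B\oplus A$ as a union of translates of $A$.

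First I would fix $b\in G$ and consider the left translation $L_b\colon G\to G$ given by $L_b(x)=b\oplus x$. Since $L_b$ is the composition of the continuous map $x\mapsto(b,x)$ with the binary operation $\oplus$, and $\oplus$ is jointly continuous by the definition of a paratopological gyrogroup, the map $L_b$ is continuous. Next I would check that $L_b$ is in fact a homeomorphism. By the left cancellation laws (Proposition \ref{Pro:gyr}(2) and Theorem \ref{The1}(1)) we have $\ominus b\oplus(b\oplus x)=x$ and $b\oplus(\ominus b\oplus x)=x$ for all $x\in G$, so $L_b$ is a bijection whose two-sided inverse is the left translation $L_{\ominus b}$ by the fixed element $\ominus b$. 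The one point requiring care — since in a paratopological gyrogroup the inversion map $\ominus(\cdot)$ is \emph{not} assumed continuous — is that $L_{\ominus b}$ is continuous for exactly the same reason $L_b$ is: it is translation by the single fixed element $\ominus b$, so no continuity of the inversion operation is invoked. Hence $L_b$ is a homeomorphism, and consequently $b\oplus A=L_b(A)$ is open whenever $A$ is open.

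Finally I would assemble the statement. For an arbitrary subset $B\subseteq G$ we have
$$B\oplus A=\bigcup_{b\in B}(b\oplus A)=\bigcup_{b\in B}L_b(A),$$
which is a union of open sets and therefore open.

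I do not expect a genuine obstacle here; the whole content lies in the observation that a left translation by a fixed point remains a homeomorphism in the paratopological setting, because its inverse is again a fixed left translation. Thus openness is preserved under $b\oplus(\cdot)$ even though global continuity of inversion is unavailable, and passing to the union over $b\in B$ completes the argument.
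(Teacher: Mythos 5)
Your proof is correct and is essentially the standard argument; the paper itself only cites this proposition from Atiponrat--Maungchang without reproducing a proof, and what you wrote is exactly that argument. The key observation is the one you isolate: $L_b$ and $L_{\ominus b}$ are mutually inverse by the two left cancellation laws (Proposition \ref{Pro:gyr}(2) and Theorem \ref{The1}(1)), and each is continuous as a translation by a fixed element, so no continuity of $\ominus(\cdot)$ is ever invoked; then $B\oplus A=\bigcup_{b\in B}(b\oplus A)$ is open as a union of open sets.
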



In the following theorem, we characterize the families of subsets of a gyrogroup $G$ which can appear as neighborhood bases of the neutral element in topological gyrogroups, which likes the Pontrjagin conditions in topological groups. This result can be find in \cite{JX}. For the sake of completion, we give the detailed process of proof of Proposition \ref{the}.

\begin{lemma}\label{LEM2.8}\cite{JX}
Let $G$ be a topological gyrogroup and $x\in G$. Then $L_x^\boxplus(\cdot):G\rightarrow G$ is homeomorphisms, where $L_x^\boxplus(\cdot)$ is defined as: $L_x^\boxplus(y)=x\boxplus y$ for each $y\in G$.
\end{lemma}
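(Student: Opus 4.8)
The plan is to strip the gyration out of the coaddition entirely, rewriting $x\boxplus y$ in terms of $\oplus$ and $\ominus$ alone, and then to display $L_x^\boxplus$ as a finite composition of maps each of which is already a homeomorphism of $G$. Since a finite composition of homeomorphisms is a homeomorphism, this disposes of the lemma without ever having to track the gyration $\text{gyr}[x,\ominus y]$ as $y$ varies, which is the feature that makes a direct attack awkward.

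First I would unfold $x\boxplus y=x\oplus\text{gyr}[x,\ominus y]\,y$ and simplify the gyration by applying the gyrator identity (Proposition \ref{Pro:gyr}(3)) with $a=x$, $b=\ominus y$, $c=y$. Because $\ominus y\oplus y=e$, the inner factor collapses and yields $\text{gyr}[x,\ominus y](y)=\ominus(x\ominus y)\oplus x$, so that
\[
x\boxplus y=x\oplus\bigl(\ominus(x\ominus y)\oplus x\bigr).
\]
Writing $\iota$ for the inversion $y\mapsto\ominus y$, $L_x$ for the left translation $z\mapsto x\oplus z$, and $R_x$ for the right translation $z\mapsto z\oplus x$, this identity reads $L_x^\boxplus=L_x\circ R_x\circ\iota\circ L_x\circ\iota$, a composition of five self-maps of $G$.

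It then remains to check that each factor is a homeomorphism. The map $\iota$ is continuous by the definition of a topological gyrogroup and is its own inverse by Proposition \ref{Pro:gyr}(1); the left translation $L_x$ is continuous with continuous inverse $L_{\ominus x}$ by the left cancellation laws (Proposition \ref{Pro:gyr}(2) and Theorem \ref{The1}(1)). The one genuinely gyrogroup-specific point—and the step I expect to be the main obstacle—is that the right translation $R_x$ is a homeomorphism, since, unlike for $L_x$, its bijectivity is not immediate. Here I would use the second right cancellation law (Theorem \ref{The1}(3)), $(b\boxminus x)\oplus x=b$, to identify the candidate inverse $b\mapsto b\boxminus x$ and to get $R_x\circ(b\mapsto b\boxminus x)=\mathrm{id}$; the reverse composition $(z\oplus x)\boxminus x=z$ I would verify by writing $b\boxminus x=b\ominus\text{gyr}[b,x]x$, reducing $\text{gyr}[z\oplus x,x]$ to $\text{gyr}[z,x]$ via the loop property ($G4$) and then applying the left gyroassociative law ($G3$). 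Continuity of $b\mapsto b\boxminus x$ follows once more from the gyrator identity, which expresses it through $\oplus$ and $\ominus$ only. With all five factors homeomorphisms, $L_x^\boxplus$ is a homeomorphism, completing the argument.
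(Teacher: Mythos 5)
Your proof is correct and follows essentially the same route as the paper: both apply the gyrator identity to collapse $\text{gyr}[x,\ominus y](y)$ to $\ominus(x\ominus y)\oplus x$ and then write $L_x^\boxplus=L_x\circ R_x\circ\ominus\circ L_x\circ\ominus$ as a composition of homeomorphisms. The only difference is that you supply the justification (via the right cancellation laws) that $R_x$ is a homeomorphism, a point the paper simply asserts.
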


\begin{proof}
According to the definition, we have that
\begin{align*}
x\boxplus y&=x \oplus \text{gyr}[x,\ominus y]y
\\&=x \oplus(\ominus(x\ominus y)\oplus(x\oplus(\ominus y \oplus y)))
\\&=x \oplus(\ominus(x\ominus y)\oplus x)
\end{align*}
Hence, $L_x^\boxplus(y)=L_x(R_x(\ominus(L_x(\ominus(y)))))$. Since the operations $L_x, R_x$ and $\ominus$ are homeomorphisms, so is their the compositions.
\end{proof}

\begin{proposition}\label{the}\cite{JX}
Let $G$ be a Hausdorff topological gyrogroup and $\mathcal{U}$ an open base at the neutral element $e$ of $G$.
Then the following conditions hold:
\begin{enumerate}
\item[(1)] for every $U\in\mathcal{U}$, there exists an element $V\in \mathcal{U}$ such that $V\oplus V\subseteq U$;
\item[(2)] for every $U\in\mathcal{U}$, and every $x\in U$, there exists $V\in \mathcal{U}$ such that $x\oplus V\subseteq U$;
\item[(3)] for every $U\in\mathcal{U}$ and $x\in G$, there exists $V\in \mathcal{U}$ such that $\ominus x\oplus(V\oplus x)\subseteq U$;
\item[(4)] for $U, V\in\mathcal{U}$, there exists $W\in\mathcal{U}$ such that $W\subseteq U\cap V$;
\item[(5)] for every $U\in\mathcal{U}$ and $a,b\in G$, there exists an element $V\in \mathcal{U}$ such
that gyr$[a,b]V\subseteq U$;
\item[(6)] for every $U\in\mathcal{U}$ and $b\in G$, there exists an element $V\in \mathcal{U}$ such that $\bigcup_{v\in V}\text{gyr}[v,b]V\subseteq U$;
\item[(7)] $\{e\}=\bigcap_{U\in\mathcal{U}} (U\boxminus U)$.
\item[(8)] for every $U\in\mathcal{U}$ and $x\in G$, there exists $V\in \mathcal{U}$ such that $V\boxplus x\subseteq x\oplus U$ and $x \oplus V \subseteq x\boxplus U$;
\item[(9)] for every $U\in\mathcal{U}$, there exists $V\in \mathcal{U}$ such that $\ominus V\subseteq U$.
\end{enumerate}

Conversely, let $G$ be a gyrogroup and let $\mathcal{U}$ be a family of subsets such that every element in which contains the neutral element $e$ in $G$ and satisfying conditions (1)-(9).
Then the family $\mathcal{B}_{\mathcal{U}}=\{a\oplus U:a\in G, U\in \mathcal{U}\}$ is a base for a Hausdorff topology $\mathcal{T}_{\mathcal{U}}$ on $G$.
With this topology, $G$ is a topological gyrogroup.
\end{proposition}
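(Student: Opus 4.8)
The plan is to prove the two implications separately, spending almost all the effort on the converse, since the forward direction is a routine transcription of the continuity of the structure maps. For the forward direction I would verify (1)--(9) one at a time by feeding the appropriate continuity into the definition of an open base: (1), (2) and (9) are just the continuity at the relevant points of $\oplus$, of $L_x$ and of $\ominus$; (3) is the continuity at $e$ of $y\mapsto\ominus x\oplus(y\oplus x)$, whose value at $e$ is $e$; (5) is the continuity at $e$ of the homeomorphism $\text{gyr}[a,b]$; and (6) comes from the joint continuity at $(e,e)$ of $(v,w)\mapsto\text{gyr}[v,b]w=\ominus(v\oplus b)\oplus(v\oplus(b\oplus w))$ via Proposition \ref{Pro:gyr}(3), whose value there is $e$. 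Condition (4) is the directedness of a base, (8) follows because the coaddition maps are continuous (being built from the continuous $\oplus$, $\ominus$ and gyrations) while $L_x^{\boxplus}$ is a homeomorphism by Lemma \ref{LEM2.8}, compared at $e$ using $e\boxplus x=x=x\boxplus e$, and (7) is the only place the Hausdorff hypothesis is used: $\boxminus$ is continuous with $e\boxminus e=e$, so every neighbourhood of $e$ contains some $U\boxminus U$, and $T_1$-ness forces the intersection down to $\{e\}$.

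For the converse I would proceed in five steps. First, to see that $\mathcal{B}_{\mathcal{U}}$ is a base, I would show that whenever $x\in a\oplus U$ there is $W\in\mathcal{U}$ with $x\oplus W\subseteq a\oplus U$: writing $x=a\oplus u$ with $u\in U$, condition (2) gives $V_1$ with $u\oplus V_1\subseteq U$, condition (5) gives $W$ with $\text{gyr}[u,a]W\subseteq V_1$, and the gyroassociative rearrangement $(a\oplus u)\oplus W=a\oplus(u\oplus\text{gyr}[u,a]W)$, valid because $\text{gyr}[a,u]\text{gyr}[u,a]=\mathrm{id}$ by Proposition \ref{Pro:gyr}(7), lands inside $a\oplus U$. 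Intersecting two such $W$'s by (4) gives the base property and hence a topology $\mathcal{T}_{\mathcal{U}}$. The same rearrangement together with (5) then shows that every left translation $L_a$ and every gyration $\text{gyr}[a,b]$ is continuous; since $L_{\ominus a}=L_a^{-1}$ and $\text{gyr}[a,b]^{-1}=\text{gyr}[b,a]$, both are homeomorphisms, so $\mathcal{T}_{\mathcal{U}}$ is homogeneous and later continuity checks reduce to behaviour near $e$.

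The heart of the argument is joint continuity of $\oplus$, i.e.\ that $G$ is a paratopological gyrogroup. Fixing $a,b$ and a basic neighbourhood $(a\oplus b)\oplus U$ of $a\oplus b$, I would seek $V_1,V_2\in\mathcal{U}$ with $(a\oplus V_1)\oplus(b\oplus V_2)\subseteq(a\oplus b)\oplus U$. By left cancellation this is equivalent to forcing
\[
t:=\ominus(a\oplus b)\oplus\big[(a\oplus v_1)\oplus(b\oplus v_2)\big]
\]
into $U$ for all $v_1\in V_1,\ v_2\in V_2$. A gyroassociative computation rewrites this as $t=\text{gyr}[a,b]\big(\ominus b\oplus M\big)$ with $M=v_1\oplus\text{gyr}[v_1,a](b\oplus v_2)$, so by condition (5) it suffices to drive $\ominus b\oplus M$ into a prescribed member of $\mathcal{U}$. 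Expanding $M$ and using that $\text{gyr}[v_1,a]$ is an automorphism splits the error into a genuinely small factor $\text{gyr}[v_1,a]v_2$, controlled uniformly in $v_1$ by condition (6), and the factor $\text{gyr}[v_1,a]b$. The main obstacle is precisely this last factor: its first argument $v_1$ moves, so (5) does not apply to it directly, and the naive estimate $\text{gyr}[v_1,a]b\approx b$ is exactly the sort of statement whose proof seems to require the continuity one is still establishing. The way to break this circularity is to rewrite $\ominus b\oplus(v_1\oplus\text{gyr}[v_1,a]b)$ with the gyrator identity so that its leading behaviour is the conjugation $\ominus b\oplus(v_1\oplus b)$, which condition (3) (with $x=b$) sends into any prescribed neighbourhood, and then to absorb the two resulting small contributions into a single member of $\mathcal{U}$ using (1). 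Keeping every gyration that occurs in the form $\text{gyr}[\text{small},a]$ so that (5), (6) apply, and organising these absorptions, is the delicate technical core.

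Finally, continuity of $\ominus$ follows by the same devices: Proposition \ref{Pro:gyr}(4) gives $\ominus(x\oplus v)=\text{gyr}[x,v](\ominus v\ominus x)$, and the even property $\text{gyr}[x,v]=\text{gyr}[\ominus v,\ominus x]$ converts this into a gyration with small first argument, controllable by (9), (3), (5) and (6) as in the previous step; this upgrades $G$ to a topological gyrogroup. For the Hausdorff property, homogeneity reduces the task to separating $e$ from an arbitrary $g\neq e$. Two basic neighbourhoods $U$ and $g\oplus U$ meet exactly when $u=g\oplus u'$ for some $u,u'\in U$, and since right translations are injective the Second Right Cancellation Law (Theorem \ref{The1}(3)) rewrites $g\oplus u'=u=(u\boxminus u')\oplus u'$ as $g=u\boxminus u'\in U\boxminus U$. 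As $g\neq e$, condition (7) furnishes $U\in\mathcal{U}$ with $g\notin U\boxminus U$, whence $U\cap(g\oplus U)=\emptyset$; translating back by the homeomorphisms of Step two yields disjoint neighbourhoods of any two distinct points, so $\mathcal{T}_{\mathcal{U}}$ is Hausdorff.
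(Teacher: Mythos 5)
Your forward direction and the bulk of your converse track the paper's own argument: the base property via the rearrangement $(a\oplus u)\oplus W=a\oplus(u\oplus\text{gyr}[u,a]W)$ together with (2), (4), (5); joint continuity of $\oplus$ by gyroassociative regrouping controlled by (1), (3), (5), (6); and the Hausdorff property from (7) (the paper separates $x$ from $y$ directly, using one extra application of (5) to shrink $x\oplus V$, rather than invoking homogeneity as you do, but that difference is cosmetic).

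The genuine gap is your Step 4, the continuity of $\ominus$. You write $\ominus(x\oplus v)=\text{gyr}[\ominus v,\ominus x](\ominus v\ominus x)$ and assert this is ``controllable by (9), (3), (5) and (6) as in the previous step,'' but the previous step's technique does not transfer. After (9) and (3) reduce matters to $\text{gyr}[w,\ominus x](\ominus x\oplus u_1)=\text{gyr}[w,\ominus x](\ominus x)\oplus\text{gyr}[w,\ominus x](u_1)$ with $w$ and $u_1$ small, you are left with $\text{gyr}[w,\ominus x](\ominus x)$: a gyration with small, varying first argument applied to the \emph{fixed, large} element $\ominus x$. Condition (5) needs both arguments of the gyration fixed, condition (6) needs the gyrated element to be small as well, and the gyrator identity only re-expresses this term as a product that is no easier to estimate; so none of your cited conditions reaches it. It is telling that your converse never invokes condition (8) --- that condition is in the list precisely to handle the inverse. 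The paper's Claim 6 disposes of this in three lines: by (8) choose $U_1$ with $U_1\boxplus(\ominus x)\subseteq\ominus x\oplus U$, by (9) choose $U_2$ with $\ominus U_2\subseteq U_1$, by (8) again choose $V$ with $x\oplus V\subseteq x\boxplus U_2$; then Theorem \ref{The2} gives $\ominus(x\oplus V)\subseteq\ominus(x\boxplus U_2)=(\ominus U_2)\boxplus(\ominus x)\subseteq U_1\boxplus(\ominus x)\subseteq\ominus x\oplus U$. You should either adopt that argument or actually supply the missing control of $\text{gyr}[w,\ominus x](\ominus x)$; as written, Step 4 does not close. (A smaller caveat: in Step 3 you explicitly defer the ``delicate technical core'' of keeping every gyration in a form that (5) and (6) can see; since that is exactly where the work lies, it too must be written out, though there your toolkit does match the paper's.)
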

\begin{proof}
Let $U\in\mathcal{U}$.

(1) Since $G$ is a topological gyrogroup, the operation $op_2:G\times G\rightarrow G$ defined by $op_2(x,y)=x\oplus y$ is continuous.
Because $e\oplus e=e$, and $U$ is a neighborhood of $e$, there exist neighborhoods $O$ and $W$ of $e$ such that $O\oplus W\subseteq U$.
We choose $V\in \mathcal{U}$ such that $V\subseteq O\cap W$.
Then $V\oplus V\subseteq W$.

(2) Let $x\in U$.
We define $R_x:G\rightarrow G$ by $R_x(y)=x\oplus y$.
Since $R_x(e)=x$ and $R_x$ is continuous at $e$, there exists $V\in\mathcal{U}$ such that $x\oplus V=R_x(V)\subseteq U$.

(3) For every $x\in G$, we define left translation map $L_{\ominus x}:G\rightarrow G$ by $L_{\ominus x}(y)=\ominus x\oplus y$.
By the continuous of $L_{\ominus x}$, $L_{\ominus x}(x)=e$  and $U$ is a neighborhood of $e$, there exists a neighborhood $V'$ of $x$ such that $L_{\ominus x}(V')\subseteq U$,
that is $\ominus x\oplus V'=L_{\ominus x}(V')\subseteq U$.
We also define the right translation map $R_{x}:G\rightarrow G$ by $R_{x}(y)=y\oplus x$.
Then $R_{x}(e)=x$.
Because $R_{x}$ is continuous at $e$, for the neighborhood $V'$ of $x$, there exists $V\in \mathcal{U}$ such that $R_{x}(V)\subseteq V'$,
that is $V\oplus x=R_{x}(V)\subseteq V'.$
So we get $\ominus x\oplus (V\oplus x)\subseteq \ominus x\oplus V'\subseteq U$.

(4) It is clear since $\mathcal{U}$ is an open base at $e.$

(5) For every $a, b\in G$,
we define $f_{a,b}:G\rightarrow G$ by $f_{a,b}(x)=\text{gyr}[a,b] x$.
Since $f_{a,b}(e)=e$ and $f_{a,b}$ is continuous at $e$,
for every $U\in\mathcal{U}$, there exists $V\in\mathcal{U}$ such that $f_{a,b}(V)\subseteq U$, that is, gyr$[a,b] V\subseteq U$.

(6) Take $W\in\mathcal{U}$ such that $W\oplus W\subseteq U$.
Then $b\oplus(W\oplus W)$ is an open set containing $b$.
Since $G$ is a topological gyrogroup, one can find $V\in\mathcal{U}$ such that $$(b\oplus V)\oplus V\subseteq b\oplus(W\oplus W)\quad\quad\quad(*)$$
Note that $$(b\oplus V)\oplus V=b\oplus (V\oplus \bigcup_{v\in V}\text{gyr}[v,b]V)\quad\quad\quad(**)$$
By (*) and (**) we have
$b\oplus(V\oplus \bigcup_{v\in V}\text{gyr}[v,b]V)\subseteq b\oplus(W\oplus W)$, which means $V\oplus \bigcup_{v\in V}\text{gyr}[v,b]V\subseteq W\oplus W$.
So we can get $\bigcup_{v\in V}\text{gyr}[v,b]V\subseteq W\oplus W\subseteq U$.

(7) We assume that $G$ is Hausdorff.
If $\bigcap_{U\in\mathcal{U}} (U\boxminus U)\neq\{e\}$, then there is $x\in \bigcap_{U\in\mathcal{U}} (U\boxminus U)$ such that $x\neq e$.
Since $G$ is Hausdorff, there are an open set $V_1$ containing $x$ and a open set $V\in \mathcal{U}$ such that $V_1\cap V=\emptyset$. Since $V_1$ is a neighbourhood of $x$, $x\oplus e=x$ and $G$ is a topological gyrogroup,  one can find $U\in\mathcal{U}$ such that $(x\oplus U)\subseteq V_1$, hence we have that $(x\oplus U)\cap V=\emptyset$.
We choose $W\in \mathcal{U}$ such that $W\subseteq U\cap V$.
Then $(x\oplus W)\cap W=\emptyset$, that is, $x\notin W\boxminus W$, which is a contradiction.

For (8). Since $G$ is a topological gyrogroup,  it is obvious that $(x\oplus U)\ominus x$ is an open set containing the the neutral element $e$ of $G$.  Hence there is a $V_1\in \mathcal{U}$ such that $V_1\subseteq (x\oplus U)\ominus x$, which is equivalent to $V_1\boxplus x\subseteq x\oplus U$. By Lemma \ref{LEM2.8}, we have that $x\boxplus U$ is an open set containing $x$. Since the operation $L_x$ is continuous and $L_x(e)=x$, one can find $V_2\in \mathcal{U}$ such that $L_x(V_2)=x \oplus V_2 \subseteq x\boxplus U$. Take a $V\in \mathcal{U}$ such that $V\subseteq V_1\cap V_2$. Then the set $V$ is the required.

For (9). Since $G$ is a topological gyrogroup, the operation $\ominus$ is continuous. Clearly, $U$ is an open set containing the neutral element $e$, so one can find $V\in\mathcal{U}$ such that $\ominus V\subseteq U$.

To prove the converse, let $\mathcal{U}$ be a family of subsets of $G$ such that conditions (1)-(9) hold.
Let $\mathcal{T}=\{W\subseteq G: \text{for every~} x\in W~\text{there exists~} U\in\mathcal{U}~\text{such that~} x\oplus U\subseteq W\}.$

{\bf Claim 1.} $\mathcal{T}$ is a topology on $G$.
It is clear that $G\in\mathcal{T}$ and $\emptyset\in\mathcal{T}$.
It also easy to see that $\mathcal{T}$ is closed under unions.
To show that $\mathcal{T}$ is closed under finite intersections, let $V,W\in\mathcal{T}$.
Let $x\in V\cap W$. Since $x\in V\in\mathcal{T}$ and $x\in W\in\mathcal{T}$, there exist $O,Q\in\mathcal{U}$ such that $x\oplus O\subseteq V$ and $x\oplus Q\subseteq W$.
From (5) it follows that there exists $U\in\mathcal{T}$ such that $U\subseteq O\cap Q$.
Then, we have $x\oplus U\subseteq V\cap W$.
Hence, $V\cap W\in\mathcal{T}$, and $\mathcal{T}$ is a topology on $G$.

{\bf Claim 2.} If $O\in \mathcal{U}$ and $g\in G$, then $g\oplus O\in\mathcal{T}$.

Take any $x\in g\oplus O$, then $\ominus g\oplus x\in O$.
By property (2), there exists $V'\in\mathcal{U}$ such that $\ominus g\oplus x\oplus V'\subseteq O$.
For $V'$ and $\ominus g, x\in G$, there exists $V\in\mathcal{U}$ such that $\text{gyr}[\ominus g,x]V\subseteq V'$ by condition (5).
So we have $\ominus g\oplus(x\oplus V)=(\ominus g\oplus x)\oplus \text{gyr}[\ominus g,x]V\subseteq O$,
that is $x\oplus V\subseteq g\oplus O$.
Hence $g\oplus O\in\mathcal{T}$.

{\bf Claim 3.} The family $\mathcal{B}_{\mathcal{U}}=\{a\oplus U:a\in G, U\in \mathcal{U}\}$ is a base for the topology $\mathcal{T}$ on $G$.

Indeed, it follows from Claim 2 and the definition of $\mathcal{T}$.

{\bf Claim 4.} The multiplication in $G$ is continuous with respect to the topology $\mathcal{T}$.

Let $a, b$ be arbitrary elements of $G$, and $O$ be any element of $\mathcal{T}$ such that $a\oplus b\in O$.
Then there exists $W\in\mathcal{U}$ such that $(a\oplus b)\oplus W\subseteq O$.
There exists $U\in\mathcal{U}$ such that $a\oplus b\oplus \text{gyr}[a,b]U\subseteq(a\oplus b)\oplus W$ by condition (5).
For $U$ there exists $U_1\in\mathcal{U}$ such that $U_1\oplus U_1\subseteq U$.
For $b$ and $U_1$ there exists $U_2\in\mathcal{U}$ such that $\bigcup_{v\in U_2}\text{gyr}[v,b]U_2\subseteq U_1$ by condition (6).
By condition (4), we can get $U_3\subseteq U_1\cap U_2$.
For $U_3\in\mathcal{U}$, apply (3) to choose $U_4\in\mathcal{U}$ such that $U_4\oplus b\subseteq b\oplus U_3$.
Using condition (6) we can get $U_5\in\mathcal{U}$ such that $\bigcup_{v\in U_5}\text{gyr}[v,b]U_5\subseteq U_3$.
By the condition (4), we get $U_6\subseteq U_4\cap U_5$.
We have
\begin{align*}
\\& a\oplus U_6\oplus (b\oplus U_6)) \quad
\\&= a\oplus U_6\oplus \text{gyr}[a,e](b\oplus U_6)) \quad
\\&\subseteq a\oplus U_6\oplus \bigcup_{v\in U_6}\text{gyr}[a,v](b\oplus U_6)) \quad
\\&= a\oplus (U_6\oplus (b\oplus U_6)) \quad
\\&= a\oplus ((U_6\oplus b)\oplus \bigcup_{v\in U_6}\text{gyr}[v,b]U_6))) \quad
\\&\subseteq a\oplus ((U_4\oplus b)\oplus \bigcup_{v\in U_5}\text{gyr}[v,b]U_5))) \quad
\\&\subseteq a\oplus ((U_4\oplus b)\oplus U_3)  \quad \quad\quad\quad\quad\quad
\\&\subseteq a\oplus ((b\oplus U_3)\oplus U_3)  \quad \quad\quad\quad\quad\quad
\\&= a\oplus (b\oplus (U_3\oplus \bigcup_{v\in U_3}\text{gyr}[v,b]U_3))) \quad
\\&\subseteq a\oplus (b\oplus (U_1\oplus \bigcup_{v\in U_2}\text{gyr}[v,b]U_2))) \quad
\\&\subseteq a\oplus (b\oplus (U_1\oplus U_1)) \quad
\\&\subseteq a\oplus (b\oplus U)  \quad \quad\quad\quad\quad\quad
\\&=a\oplus b\oplus \text{gyr}[a,b]U\quad\quad\quad
\\&\subseteq(a\oplus b)\oplus W \quad\quad\quad
\end{align*}
Since $U_6\in\mathcal{U}$, $a\oplus U_6, b\oplus U_6$ are the neighborhood of $a, b$.
Thus, the multiplications in $G$ is continuous with respect to the topology $\mathcal{T}$.
This proves Claim 4.

{\bf Claim 5.} The gyrogroup $G$ with topology $\mathcal{T}$ is Hausdorff.

For every $x,y\in G$ and $x\neq y$, then $\ominus y\oplus x\neq e$.
There exist $U\in\mathcal{U}$ such that $\ominus y\oplus x\notin U\boxminus U$, which implies $\ominus y\oplus x\oplus U\cap U=\emptyset$.
For $x,y\in G$ and $U\in\mathcal{U}$, there exists $V\in\mathcal{U}$ such that $\text{gyr}[\ominus y, x]V\subseteq U$ by condition (5).
Then we claim that $x\oplus V\cap y\oplus U=\emptyset$, which implies that the gyrogroup $G$ with the topology $\mathcal{T}$ is Hausdorff.

In fact, if  $x\oplus V\cap y\oplus U\neq \emptyset$, then $\ominus y \oplus(x\oplus V)\cap U\neq \emptyset$. Hence, we have that $(\ominus y \oplus x)\oplus \text{gyr}[\ominus y, x]V\cap U \neq \emptyset$. Since $\text{gyr}[\ominus y, x]V\subseteq U$, we have that $(\ominus y \oplus x)\oplus U\cap U \neq \emptyset$. This is a contradiction.

{\bf Claim 6.} The inverse operation $\ominus: (G, \mathcal{T})\rightarrow (G, \mathcal{T})$ is continuous.

Take any $x\in G$ and any $U\in \mathcal{U}$. By the condition (8), there is $U_1\in \mathcal{U}$ such that $U_1\boxplus(\ominus x)\subseteq \ominus x\oplus U$. For $U_1$, applying the condition (9), one can find $U_2\in \mathcal{U} $ such that $\ominus U_2\subseteq U_1$. For $U_2$, applying the condition (8) again, one can find $V\in  \mathcal{U}$ such that $x\oplus V\subseteq x \boxplus U_2$. Then we have that
\begin{align*}
\ominus(x\oplus V)&\subseteq \ominus(x \boxplus U_2)
\\&=\ominus U_2 \boxplus (\ominus x)
\\&\subseteq U_1\boxplus (\ominus x)
\\&\subseteq \ominus x\oplus U
\end{align*}
Thus we have proved that the inverse operation $\ominus$ is continuous.
We prove that $G$ is a Hausdorff topological gyrogroup with the topology $\mathcal{T}$.
\end{proof}

\begin{remark}
Let $G$ be a Hausdorff paratopological gyrogroup and $\mathcal{U}$ an open base at the neutral element $e$ of $G$.
Then the conditions (1)-(7) in Proposition \ref{the} hold.
Conversely, let $G$ be a gyrogroup and let $\mathcal{U}$ be a family of subsets of $G$ satisfying conditions (1)-(7) of Proposition \ref{the}.
Then the family $\mathcal{B}_{\mathcal{U}}=\{a\oplus U:a\in G, U\in \mathcal{U}\}$ is a base for a Hausdorff topology $\mathcal{T}_{\mathcal{U}}$ on $G$.
With this topology, $G$ is a paratopological gyrogroup.
\end{remark}

\section{Embeddings into path-connected, locally path-connected gyrogroups}
Using the sufficient conditions of Proposition \ref{the}, we can topologize the extended gyrogroup $G^\bullet$ in the case when $G$ is a
topological gyrogroup, as shown in the following theorem. Let $G$ be a topological gyrogroup.
Given an open neighborhood $U$ of $e$ in $G$ and a real number $\varepsilon> 0$, define
$$O(U, \varepsilon) = \{f\in G^\bullet| \mu(\{r \in J|f(r) \notin U\}) < \varepsilon \},$$
where $\mu$ is the Lebesgue measure on the real line.

\begin{theorem}
Let $G$ be a Hausdorff topological gyrogroup and $\mathcal{U}$ an open base at the neutral element $e$ of $G$. Then, the family
$$\{f \oplus O(U, \varepsilon)| U\in \mathcal{U}, \varepsilon > 0 \text{~and~} f\in G^\bullet\}$$
forms a base of a topology on $G^\bullet$, and $G^\bullet$ becomes a Hausdorff topological gyrogroup.
\end{theorem}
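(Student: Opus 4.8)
The plan is to apply the converse (sufficiency) direction of Proposition \ref{the} to the gyrogroup $G^\bullet$ and the candidate neighborhood family $\mathcal{U}^\bullet=\{O(U,\varepsilon): U\in\mathcal{U},\ \varepsilon>0\}$ at the identity $\bar e$ of $G^\bullet$, where $\bar e$ is the constant step function with value $e$. First I would record the pointwise description of the operations on $G^\bullet$ inherited from Theorem \ref{Th1}, namely $(\ominus f)(r)=\ominus f(r)$, $\text{gyr}[f,g](h)(r)=\text{gyr}[f(r),g(r)](h(r))$ and $(f\boxplus g)(r)=f(r)\boxplus g(r)$, and note that every $O(U,\varepsilon)$ contains $\bar e$ since $\{r:e\notin U\}=\emptyset$. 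It then suffices to verify conditions (1)--(9) of Proposition \ref{the} for $\mathcal{U}^\bullet$; the assertion that $\{f\oplus O(U,\varepsilon)\}$ is a base of a Hausdorff topological gyrogroup topology follows at once from the converse part of that proposition.

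The unifying idea for the ``uniform'' conditions (1), (4), (5), (6), (9) is that the operations act coordinatewise while the Lebesgue measure is subadditive, so bounding the bad set reduces to the matching condition in $G$. For (1), I would choose $V\in\mathcal{U}$ with $V\oplus V\subseteq U$ and set $\delta=\varepsilon/2$; then for $f,g\in O(V,\delta)$ the inclusion $\{r:(f\oplus g)(r)\notin U\}\subseteq\{r:f(r)\notin V\}\cup\{r:g(r)\notin V\}$ together with subadditivity gives $O(V,\delta)\oplus O(V,\delta)\subseteq O(U,\varepsilon)$. Conditions (5), (6) and (9) go the same way, except that the step functions $a,b$ (resp.\ $b$) take only finitely many values $x_1,\dots,x_n$: for each value I apply the corresponding condition in $G$ to obtain $V_i$, intersect them via condition (4) in $G$ to a single $V\subseteq\bigcap_i V_i$, and the coordinatewise estimate again confines the bad set inside $\{r:g(r)\notin V\}$ (two such sets, controlled by $\delta=\varepsilon/2$, for (6)).

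Conditions (2), (3) and (8) are where the step-function structure really enters. For (2) the difficulty is that $f$ itself may take values outside $U$, so I would use the slack $\eta=\varepsilon-\mu(\{r:f(r)\notin U\})>0$: on the pieces where $f\equiv x_i\in U$ I invoke condition (2) in $G$ to get $V_i$ with $x_i\oplus V_i\subseteq U$, take $V\subseteq\bigcap V_i$ and $\delta=\eta/2$, and bound $\mu(\{r:(f\oplus g)(r)\notin U\})\le\mu(\{r:f(r)\notin U\})+\delta<\varepsilon$. Condition (3) applies condition (3) in $G$ value-by-value on the finitely many values of $h$. Condition (8), the $\boxplus$ versus $\oplus$ comparison, applies condition (8) in $G$ value-by-value together with Lemma \ref{LEM2.8}: on each piece of the common refinement the unique $w(r)$ solving $h(r)\boxplus w(r)=h(r)\oplus g(r)$ is $(L^{\boxplus}_{h(r)})^{-1}(h(r)\oplus g(r))$, so $w$ is again a step function, and injectivity of $L^{\boxplus}_{x_i}$ forces $w(r)\in U$ wherever $g(r)\in V$.

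I expect the genuine obstacle to be condition (7), the separation requirement $\{\bar e\}=\bigcap\,(O(U,\varepsilon)\boxminus O(U,\varepsilon))$. The inclusion $\supseteq\{\bar e\}$ is clear since $\bar e\boxminus\bar e=\bar e$ and $\bar e\in O(U,\varepsilon)$. For the reverse, given $f\neq\bar e$ I would use that $f$ is a step function to find an interval $I=[a_k,a_{k+1})$ of positive length $m$ on which $f\equiv x$ with $x\neq e$; condition (7) in $G$ then supplies $U\in\mathcal{U}$ with $x\notin U\boxminus U$. Choosing $\varepsilon<m/2$, if one had $f=g\boxminus h$ with $g,h\in O(U,\varepsilon)$, then $\{r\in I:g(r)\in U,\ h(r)\in U\}$ would have positive measure, so some $r\in I$ would give $x=g(r)\boxminus h(r)\in U\boxminus U$, a contradiction; hence $f\notin O(U,\varepsilon)\boxminus O(U,\varepsilon)$. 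Once (1)--(9) are in hand, Proposition \ref{the} delivers the theorem.
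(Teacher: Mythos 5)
Your proposal is correct and follows essentially the same route as the paper: it verifies conditions (1)--(9) of Proposition \ref{the} for the family $\mathcal{U}^\bullet=\{O(U,\varepsilon)\}$ at $e^\bullet$, exploiting the pointwise definition of the operations, subadditivity of Lebesgue measure, and the fact that a step function takes only finitely many values (so finitely many neighborhoods can be intersected via condition (4)). The only, harmless, divergence is in condition (7), where you invoke condition (7) of Proposition \ref{the} in $G$ directly to obtain $U$ with $x\notin U\boxminus U$, while the paper rederives the needed separation from Hausdorffness of $G$ together with continuity of $\boxminus$; both versions rest on the same positive-measure argument on an interval where $f$ is a nonidentity constant.
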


\begin{proof}
It suffices to show that the family $\mathcal{U}^\bullet = \{O(U, \varepsilon) | U \in \mathcal{U}, \varepsilon> 0\}$ satisfies the sufficient conditions (1)-(9) of Proposition \ref{the}.


(1) Let $O(U, \varepsilon) \in \mathcal{U}^\bullet$. Then, there is a set $V \in \mathcal{U}$ such that $V\oplus V\subseteq U$ by Proposition 2.9(1).
Put $f,g\in O(V, \frac{\varepsilon}{2})$. Since $\{r \in J | f (r) \oplus g(r) \notin U\} \subseteq\{r\in J | f (r) \notin V\}\cup\{ r\in J | g (r) \notin V\}$,
we can get
$\mu(\{r \in J | f (r) \oplus g(r) \notin U\})\leq\mu(\{r\in J | f (r) \notin V\})+\mu(\{ r\in J | g (r) \notin V\})<\frac{\varepsilon}{2})+\frac{\varepsilon}{2})=\varepsilon.$
Thus we have $f \oplus g \in O(U, \varepsilon)$, which implies $O(V, \frac{\varepsilon}{2})\oplus O(V, \frac{\varepsilon}{2})\subseteq O(U, \varepsilon)$.

(2) Let $O(U, \varepsilon) \in \mathcal{U}^\bullet$ and $f \in O(U, \varepsilon)$.
Then, there is a partition $\{a_0, a_1, ..., a_n\}$ of $J$ such that $f$ is
constant on each interval $[a_k, a_{k+1})$. Set $L =\{k \in\{0, 1, ... , n-1\}| f (a_k)\in U\}.$
By Proposition \ref{the}(2),
for each $k\in L$, there is a set $V_k\in \mathcal{U}$ such that $f(a_k)\oplus V_k\subseteq U$.
By Proposition \ref{the}(4), there is a set $V\in \mathcal{U}$ such that $V\subseteq\bigcap_{k\in L}V_k$.
So, $f(r) \oplus V\subseteq U$ whenever $f(r)\in U$.
We take $\delta=\varepsilon-\mu(\{r\in J|f(r)\notin U\})$.
Let $ g\in O(V, \delta)$.
We have $\{r \in J | f (r) \oplus g(r) \notin U\}\subseteq\{r \in J | f (r) \notin U\}\cup\{r \in J | g(r) \notin V\}$.
It follows that $\mu(\{r \in J | f (r) \oplus g(r) \notin U\})\leq\mu(\{r \in J | f (r) \notin U\})+\mu(\{r \in J | g(r) \notin V\})<\mu(\{r \in J | f (r) \notin U\})+\delta=\varepsilon$.
Thus, $f \oplus g \in O(U, \varepsilon)$, which implies $f \oplus O(V, \delta)\subseteq O(U,\varepsilon)$.

(3) Let $O(U, \varepsilon) \in \mathcal{U}^\bullet$ and $f \in G^\bullet$.
Then, there is a partition $\{a_0, a_1, ..., a_n\}$ of $J$ such that $f$ is
constant on each interval $[a_k, a_{k+1})$.
By Proposition \ref{the}(3),
for each $k=0, 1, ... , n-1$, there is a set $V_k\in \mathcal{U}$ such that $\ominus f(a_k)\oplus(V_k\oplus f(a_k))\subseteq U$.
By Proposition 2.9(4), there is a set $V\in \mathcal{U}$ such that $V\subseteq\bigcap^{n-1}_{i=0}V_k$.
So, $\ominus f(r)\oplus(V\oplus f(r))\subseteq U$ for all $r\in J$.
Let $ g\in O(V, \varepsilon)$.
We have $\{r \in J |\ominus f(r)\oplus(g(r)\oplus f(r))\notin U\}=\{r \in J |g(r)\notin (f(r)\oplus U)\boxminus f(r)\}\subseteq\{r \in J |g(r) \notin V\}$.
Thus, $\ominus f\oplus(O(V, \varepsilon)\oplus f)\subseteq O(U,\varepsilon)$.

(4) Let $O(U, \varepsilon), O(V, \delta) \in \mathcal{U}^\bullet$. By Proposition \ref{the}(4), there is a set $W\in \mathcal{U}$ such that $W\subseteq U\cap V$.

Let $U, V \in \mathcal{U}$, $f\in G^\bullet$, and $\varepsilon,\delta>0$. Note that if $U \subseteq V$, then $\{f\in J | f (r) \notin V\} \subseteq\{r\in J |f (r) \notin U\}.$
Therefore, $O(U,\varepsilon)\subseteq O(V,\varepsilon)$.
On the other hand, if $\varepsilon<\delta$, then $\mu(\{r\in J |f (r) \notin U\})<\varepsilon$ implies $\mu(\{r\in J |f (r) \notin U\})<\delta$.
This shows that $O(U,\varepsilon)\subseteq O(U,\delta)$.
We take $\delta_0=\text{min} \{\varepsilon,\delta\}$.
Thus, it follows above that $O(W,\delta_0)\subseteq O(U,\varepsilon)\cap O(V,\delta)$.

(5) Let $O(U, \varepsilon) \in \mathcal{U}^\bullet$ and $f,g \in G^\bullet$. Then, there is a partition $\{a_0, a_1, ..., a_n\}$ of $J$ such that both $f$ and $g$ is
constant on each interval $[a_k, a_{k+1})$.
By Proposition \ref{the}(5),
for each $k=0, 1, ... , n-1$, there is a set $V_k\in \mathcal{U}$ such that $\text{gyr}[f(a_k),g(a_k)]V\subseteq U$.
By Proposition \ref{the}(4), there is a set $V\in \mathcal{U}$ such that $V\subseteq\bigcap^{n-1}_{i=0}V_k$.
Furthermore, $\text{gyr}[f(r),g(r)]V\subseteq U$ for all $r\in J$.

Let $h\in O(V, \varepsilon)$. Then, $\{r \in J| \text{gyr}[f(r),g(r)]h(r)\notin U\}=\{r \in J| h(r)\notin \text{gyr}[g(r),f(r)]U\}\subseteq\{r\in J |h (r) \notin V\}.$
It follows that $\mu(\{r \in J| \text{gyr}[f(r),g(r)]h(r)\notin U\})\leq\mu(\{r\in J |h (r) \notin V\})<\varepsilon$.
This shows that $\text{gyr}[f,g]h\in O(U, \varepsilon)$, that is $\text{gyr}[f,g]O(V, \varepsilon)\subseteq O(U, \varepsilon)$.

(6) Let $O(U, \varepsilon) \in \mathcal{U}^\bullet$ and $f \in G^\bullet$.
Then, there is a partition $\{a_0, a_1, ..., a_n\}$ of $J$ such that $f$ is
constant on each interval $[a_k, a_{k+1})$.
By Proposition \ref{the}(6),
for each $k=0, 1, ... , n-1$, there is a set $V_k\in \mathcal{U}$ such that $\bigcup_{v\in V_k}\text{gyr}[v,f(a_k)]V_k\subseteq U$.
By Proposition \ref{the}(2), there is a set $V\in \mathcal{U}$ such that $V\subseteq\bigcap^{n-1}_{i=0}V_k$.
So, $\bigcup_{v\in V}\text{gyr}[v,f(r)]V\subseteq U$ for all $r\in J$.
Let $h, h'\in O(V, \varepsilon/2)$.
We have $\{r \in J |(\text{gyr}[h',f]h)(r)\notin U\}=\{r \in J |\text{gyr}[h'(r),f(r)]h(r)\notin U\}\subseteq\{r \in J |h'(r) \notin V\}\cup \{r \in J |h(r) \notin V\}$.
It follows that $\mu(\{r \in J |\text{gyr}[h'(r),f(r)]h(r)\notin U\})\leq\mu(\{r \in J |h'(r) \notin V\})+\mu(\{r \in J |h(r) \notin V\})<\varepsilon$.
This shows that $\text{gyr}[h',f]h\in O(U, \varepsilon)$, that is $\bigcup_{h'\in O(V, \varepsilon/2)}\text{gyr}[h',f]O(V, \varepsilon/2)\subseteq O(U, \varepsilon)$.


(7) Let $e^\bullet\neq f\in G^\bullet$. Then, there exists a subinterval
$[a, b)\subseteq J$ such that $f$ is constant on $[a, b)$ and $f(a) \neq e$.
Since $G$ is Hausdorff, it is also $T_1$. There is $U\in \mathcal{U}$ such that $f(a)\notin U$.
Then, $[a, b)\subseteq \{r \in J | f (r) \notin U\}.$
It follows that $b-a\leq \mu(\{r \in J | f (r) \notin U\})$. Thus, $f\notin O(U,b-a)$.
On the other hand,
for $G$ is a topological gyrogroup, then $op_2:G\times G\rightarrow G$ defined by $op_2(x,y)=x\boxminus y$ is continuous.
Because $e\boxminus e=e$, and $U\in \mathcal{U}$, there exist neighborhood $O$ and $W$ of $e$ such that $O\boxminus W\subseteq U$.
We choose $V\in \mathcal{U}$ such that $V\subseteq O\cap W$.
Then $V\boxminus V\subseteq W$.
Put $f,g\in O(V, \frac{b-a}{2})$. Since $\{r \in J | f (r)\boxminus g(r) \notin U\} \subseteq\{r\in J | f (r) \notin V\}\cup\{ r\in J | g (r) \notin V\}$,
we can get
$\mu(\{r \in J | f (r)\boxminus g(r) \notin U\})\leq\mu(\{r\in J | f (r) \notin V\})+\mu(\{ r\in J | g (r) \notin V\})<\frac{b-a}{2}+\frac{b-a}{2}=b-a.$
Thus we have $f\boxminus g \in O(U, b-a)$, which implies $O(V, \frac{b-a}{2})\boxminus O(V, \frac{b-a}{2})\subseteq O(U, b-a)$.
So we get $f\notin O(V, \frac{b-a}{2})\boxminus O(V, \frac{b-a}{2})$, which is a contradiction.

(8) Let $O(U, \varepsilon) \in \mathcal{U}^\bullet$ and $f \in G^\bullet$.
Then, there is a partition $\{a_0, a_1, ..., a_n\}$ of $J$ such that $f$ is
constant on each interval $[a_k, a_{k+1})$.
By Proposition \ref{the}(8),
for each $k=0, 1, ... , n-1$, there is a set $V_k\in \mathcal{U}$ such that $V_k\boxplus f(a_k)\subseteq f(a_k)\oplus U$ and $f(a_k) \oplus V_k \subseteq f(a_k)\boxplus U$.
By Proposition \ref{the}(4), there is a set $V\in \mathcal{U}$ such that $V\subseteq\bigcap^{n-1}_{i=0}V_k$.
So, $V\boxplus f(r)\subseteq f(r)\oplus U$ and $f(r) \oplus V \subseteq f(r)\boxplus U$ for all $r\in J$.
They are equivalent to $V\subseteq (f(r)\oplus U)\ominus f(r)$ and $V\subseteq \ominus f(r)\oplus (f(r)\boxplus U)$.
For all $v\in V$, there is $u\in U$ such that $$f(r) \oplus v =f(r)\boxplus u$$
if and only if $$(f(r) \oplus v )\ominus u=f(r)$$
if and only if $$\ominus u =\ominus(f(r) \oplus v) \oplus f(r)$$
if and only if $$ u =\ominus(\ominus(f(r) \oplus v) \oplus f(r)).$$

Let $h\in O(V, \varepsilon)$.
We have $\{r \in J |(\ominus f\oplus (h\boxplus f))(r)\notin U\}=\{r \in J |\ominus f(r)\oplus (h(r)\boxplus f(r))\notin U\}=\{h(r)\notin (f(r)\oplus U)\ominus f(r)\}\subseteq\{r \in J |h(r) \notin V\}$.
It follows that $\mu(\{r \in J |(\ominus f\oplus (h\boxplus f))(r)\notin U\})\leq\mu(\{r \in J |h(r) \notin V\})<\varepsilon$.
This shows that $\ominus f\oplus (h\boxplus f)\in O(U, \varepsilon)$, that is $O(V, \varepsilon)\boxplus f\subseteq f\oplus O(U, \varepsilon)$.

For the above $h\in O(V, \varepsilon)$,
\begin{align*}
&\{r \in J |(\ominus(\ominus(f \oplus h) \oplus f)(r)\notin U\}
\\&=\{r \in J |\ominus(\ominus(f(r) \oplus h(r)) \oplus f(r))\notin U\}
\\&=\{r \in J |\ominus(f(r) \oplus h(r)) \oplus f(r)\notin \ominus U\}
\\&=\{r \in J |\ominus(f(r) \oplus h(r)) \notin \ominus U\boxplus(\ominus f(r))\}
\\&=\{r \in J |f(r) \oplus h(r) \notin  f(r)\boxplus U\}
\\&=\{r \in J |h(r) \notin  \ominus f(r) \oplus (f(r)\boxplus U)\}
\\&=\{r \in J |h(r) \notin  V\}
\end{align*}
It follows that $\mu(\{r \in J |(\ominus(\ominus(f \oplus h) \oplus f)(r)\notin U\})\leq\mu(\{r \in J |h(r) \notin V\})<\varepsilon$.
This shows that $\ominus(\ominus(f \oplus h) \oplus f\in O(U, \varepsilon)$, that is $f \oplus O(V, \varepsilon) \subseteq f\boxplus O(U, \varepsilon)$.

(9) Let $O(U, \varepsilon) \in \mathcal{U}^\bullet$.
By Proposition \ref{the}(9), there is a set $V\in \mathcal{U}$ such that $\ominus V\subseteq U$.
Let $h\in O(V, \varepsilon)$.
We have $\{r \in J |\ominus h(r)\notin U\}=\{r \in J |h(r)\notin \ominus U\}\subseteq\{r \in J | h(r)\notin V\}$.
It follows that $\mu(\{r \in J |\ominus h(r)\notin U\})\leq\mu(\{r \in J | h(r)\notin V\})<\varepsilon$.
This shows that $\ominus h\in O(U, \varepsilon)$, that is $\ominus O(V, \varepsilon)\subseteq O(U, \varepsilon)$.

With this topology $\mathcal{T}_{\mathcal{U}^\bullet}$, $G^\bullet$ is a topological gyrogroup, since $\mathcal{U}^\bullet $ satisfies the sufficient conditions in Proposition 2.9.
\end{proof}

\begin{theorem}
$G^\bullet$ is path-connected and locally path-connected for any topological gyrogroup $G$.
\end{theorem}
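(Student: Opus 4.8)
The plan is to construct explicit paths by truncating step functions, mimicking the classical Hartman--Mycielski argument. For $f \in G^\bullet$ and $t \in [0,1]$ define the step function
$$f_t(r) = \begin{cases} f(r), & 0 \le r < t,\\ e, & t \le r < 1, \end{cases}$$
so that $f_0 = e^\bullet$ (the constant map $e$, which is the identity of $G^\bullet$) and $f_1 = f$. Each $f_t$ is again a step function, since adjoining $t$ to a partition on which $f$ is constant produces one on which $f_t$ is constant. First I would show that $\gamma_f : [0,1] \to G^\bullet$, $\gamma_f(t) = f_t$, is continuous.

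For continuity, fix $t_0$ and a basic neighborhood $f_{t_0} \oplus O(U, \varepsilon)$ of $f_{t_0}$. By the left cancellation law (Proposition \ref{Pro:gyr}(2)) one has $f_t \in f_{t_0} \oplus O(U,\varepsilon)$ if and only if $\ominus f_{t_0} \oplus f_t \in O(U, \varepsilon)$. A direct case analysis ($t \ge t_0$ and $t < t_0$) shows that $(\ominus f_{t_0} \oplus f_t)(r) = e$ for every $r$ outside the interval between $t_0$ and $t$: on the common region $f_{t_0}$ and $f_t$ agree, giving $\ominus x \oplus x = e$, while elsewhere one of the two values equals $e$. Hence $\{r \in J : (\ominus f_{t_0} \oplus f_t)(r) \ne e\}$ has measure $|t - t_0|$, and because $e \in U$ we get $\mu(\{r : (\ominus f_{t_0} \oplus f_t)(r) \notin U\}) \le |t - t_0|$. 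Choosing $\delta = \varepsilon$ then forces $f_t \in f_{t_0} \oplus O(U, \varepsilon)$ whenever $|t - t_0| < \varepsilon$, proving continuity. Since $\gamma_f$ joins $e^\bullet$ to $f$, and since every left translation $L_a(x) = a \oplus x$ is a homeomorphism of $G^\bullet$ with inverse $L_{\ominus a}$ (by Proposition \ref{Pro:gyr}(2) and Theorem \ref{The1}(1)), translating a path from $e^\bullet$ to $\ominus f \oplus g$ yields a path from $f$ to $g$; thus $G^\bullet$ is path-connected.

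For local path-connectedness I would show that the basic set $O(U, \varepsilon)$ is itself path-connected, whence each $f \oplus O(U, \varepsilon) = L_f(O(U,\varepsilon))$ is path-connected as a homeomorphic image, and these sets form a base. Given $h \in O(U, \varepsilon)$, the same truncation path $t \mapsto h_t$ stays inside $O(U, \varepsilon)$: since $h_t(r)$ equals either $h(r)$ or $e \in U$, one has $\{r : h_t(r) \notin U\} \subseteq \{r : h(r) \notin U\}$, so $\mu(\{r : h_t(r) \notin U\}) \le \mu(\{r : h(r)\notin U\}) < \varepsilon$. Continuity of $t \mapsto h_t$ is the special case $f = h$ of the computation above, so this is a path in $O(U, \varepsilon)$ from $e^\bullet$ to $h$, and $O(U, \varepsilon)$ is path-connected.

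The main obstacle is the continuity estimate in the second paragraph: one must pass through the left cancellation law to reduce membership in the translated neighborhood $f_{t_0} \oplus O(U, \varepsilon)$ to a statement about $\ominus f_{t_0} \oplus f_t$, since the non-associative structure prevents the naive cancellation one would use in a group, and then control the relevant set purely by its Lebesgue measure, using only that $e \in U$. Everything else is routine bookkeeping with $\mu$ and with the already established fact that $G^\bullet$ is a topological gyrogroup, so that its left translations are homeomorphisms.
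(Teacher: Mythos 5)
Your proposal is correct and follows essentially the same route as the paper: both arguments connect each $f$ to $e^\bullet$ by a truncation path $t\mapsto f_t$, bound $\mu(\{r: (\ominus f_{t_0}\oplus f_t)(r)\neq e\})$ by $|t-t_0|$ to get continuity, observe that the path stays in $O(U,\varepsilon)$ because $f_t(r)\in\{f(r),e\}$, and deduce local path-connectedness from homogeneity. The only (immaterial) difference is that you truncate at the single point $r=t$ while the paper truncates each cell $[a_k,a_{k+1})$ of the partition proportionally via $b_{k,t}=a_k+t(a_{k+1}-a_k)$; both yield the same measure estimate.
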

\begin{proof}
It is obvious that if $O(U, \varepsilon)$ is path-connected for all $O(U, \varepsilon) \in \mathcal{U}^\bullet$, then $G^\bullet$ is locally path-connected
since every topological gyrogroup is a homogeneous space.

Let $O(U, \varepsilon) \in \mathcal{U}^\bullet$ and $f \in G^\bullet$. Then, there is a partition $\{a_0, a_1, ..., a_n\}$ of $J$ such that $f$ is
constant on each interval $[a_k, a_{k+1})$.
Given $t\in[0,1]$ and $k\in\{0, 1, ... , n-1\}$, put $b_{k,t}=a_k+t(a_{k+1}-a_k)$.
For each $r\in J$, there exists $k\in\{0, 1, ... , n-1\}$ such that $r \in[a_k, a_{k+1})$.
We define $f_t : J \rightarrow G$ as follows:
$$f_t(r)=\left\{
\begin{array}{rcl}
f(r)      &      & {\text{~if~} a_k\leq r\leq b_{k,t} \text{~for some~} k}\\
e    &      & {\text{~otherwise~}}\\
\end{array} \right. \quad\quad \quad\quad$$
Specially, $f_0=e^\bullet$ and $f_1=f$. Clearly, $f_t \in G^\bullet$ for all $t\in[0, 1]$.
Furthermore, we have
$$\{r \in J | f_t(r) \notin U\}\subseteq \{r \in J | f (r) \notin U\},$$
and so $f_t\in O(U, \varepsilon)$ for all $t \in[0, 1]$.
Define a function $\varphi: [0, 1]\rightarrow O(U, \varepsilon)$ by $\varphi(t) = f_t$ for all $t\in [0, 1]$.
To show that $\varphi$ is continuous, let $t \in[0, 1]$ and $f_t\oplus O(V, \delta)$ be an open neighborhood of $\varphi(t) = f_t$ for each $V\in \mathcal{U}$ and $\delta>0$.
For each $s\in(t - \delta, t +\delta)\cap[0, 1]$.
We have $f_s\in f_t\oplus O(V, \delta)$, since
$\{r \in J | (\ominus f_t\oplus f_s)(r)\notin V\}\subseteq \{r \in J | f_t(r) \neq f_s(r)\}$ and
\begin{align*}
\\& \mu(\{r \in J | f_t(r) \neq f_s(r)\})\quad
\\&\leq\sum_{k=0}^{n-1}|b_{k,s}-b_{k,t}| \quad
\\&=\sum_{k=0}^{n-1}|(s-t)(a_{k+1}-a_{k})| \quad
\\&=\sum_{k=0}^{n-1}|s-t||(a_{k+1}-a_{k})|  \quad
\\&= |s-t| \quad
\\&<\delta. \quad\quad\quad
\end{align*}
This shows that $\varphi$ is continuous. Therefore,
$O(U, \varepsilon)$ is path-connected, and so $G^\bullet$
is locally path-connected.
In particular, for $O(U, 2) = G^\bullet$. It follows that $G^\bullet$ is
path-connected.
\end{proof}

\begin{lemma}\label{LEM4.11}
Let $G$ and $H$ be topological gyrogroups. If the homomorphism function $f: G \rightarrow H$ is continuous and open at $e$, then it is continuous and open at every point of $G$.
\end{lemma}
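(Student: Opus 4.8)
The plan is to exploit the fact that, in any topological gyrogroup, the left translation $L_a\colon x\mapsto a\oplus x$ is a homeomorphism with inverse $L_{\ominus a}$ (this is the kind of argument already used in the proof of Lemma \ref{LEM2.8}, and it follows from the joint continuity of $\oplus$ together with the left cancellation law of Theorem \ref{The1}(1)). The homomorphism $f$ intertwines these translations: from $f(a\oplus x)=f(a)\oplus f(x)$ one reads off the identity $f\circ L_a=L_{f(a)}\circ f$. First I would record the two elementary algebraic consequences of $f$ being a groupoid homomorphism that the argument needs, namely $f(e)=e$ and $f(\ominus a)=\ominus f(a)$ for every $a\in G$; both come from applying $f$ to $e=e\oplus e$ and to $e=\ominus a\oplus a$ and then cancelling on the left.

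For continuity at an arbitrary point $a$, fix a neighbourhood $W$ of $f(a)$ in $H$. Since $L_{\ominus f(a)}$ is a homeomorphism sending $f(a)$ to $e$, the set $\ominus f(a)\oplus W$ is a neighbourhood of $e$ in $H$. Continuity of $f$ at $e$ then yields a neighbourhood $V$ of $e$ in $G$ with $f(V)\subseteq \ominus f(a)\oplus W$. Now $a\oplus V$ is a neighbourhood of $a$, and using the intertwining identity followed by the left cancellation law I would compute $f(a\oplus V)=f(a)\oplus f(V)\subseteq f(a)\oplus(\ominus f(a)\oplus W)=W$, which is exactly continuity at $a$.

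For openness at $a$, take any neighbourhood $O$ of $a$. Then $\ominus a\oplus O$ is a neighbourhood of $e$ in $G$, so by openness of $f$ at $e$ the image $f(\ominus a\oplus O)$ is a neighbourhood of $e$ in $H$. Using the intertwining identity and $f(\ominus a)=\ominus f(a)$, this image equals $\ominus f(a)\oplus f(O)$. Applying the homeomorphism $L_{f(a)}$ and cancelling on the left gives $f(O)=f(a)\oplus(\ominus f(a)\oplus f(O))$, which is the image under $L_{f(a)}$ of a neighbourhood of $e$, hence a neighbourhood of $f(a)=L_{f(a)}(e)$. This is openness at $a$.

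The arguments are almost entirely formal once the translation homeomorphisms and the intertwining relation are in place; the only points requiring care---and the place where the gyrogroup setting differs cosmetically from the group case---are the bookkeeping identities $f(e)=e$ and $f(\ominus a)=\ominus f(a)$ and the repeated, correct application of the left cancellation law $c\oplus(\ominus c\oplus d)=d$ to collapse the composite translations. I expect no genuine obstacle beyond confirming that $f$ respects inverses: $f$ is assumed only to be a groupoid homomorphism and gyrogroups are neither associative nor commutative, so I would isolate those identities as a first step rather than invoke them implicitly.
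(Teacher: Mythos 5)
Your proposal is correct and follows essentially the same route as the paper: the paper's (very terse) proof rests on the factorization $f=L_{f(g)}\circ f\circ L_{\ominus g}$, which is exactly your intertwining identity $f\circ L_a=L_{f(a)}\circ f$ combined with the left cancellation law, so that continuity and openness at $e$ transfer to $g$ via the translation homeomorphisms. Your version merely spells out the neighbourhood bookkeeping and the auxiliary identities $f(e)=e$ and $f(\ominus a)=\ominus f(a)$ that the paper leaves implicit.
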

\begin{proof}
Let $g\in G$.
So $f(g)=(L_{f(g)}\circ f\circ L_{\ominus g})(g)=L_{f(g)}(f( L_{\ominus g}(g)))=L_{f(g)}(f(e))$.
Since $L_{f(g)}$ and $L_{\ominus g}$ are homeomorphism, we get $f$ is continuous and open at $g.$
\end{proof}

\begin{proposition}\cite{Wat}\label{Pro1}
Let $G$ be a
gyrogroup. The function $i_G : G \rightarrow G^\bullet$ defined by
$$i_G(x) = x^\bullet, \text{~for every~} x\in G, $$
is a gyrogroup monomorphism, where $x^\bullet : J \rightarrow G$ defined by $x^\bullet(r) = x$ for all $r\in J$. Clearly, $x^\bullet \in G^\bullet$.
Consequently, $i_G(G)$ forms a subgyrogroup of $G^\bullet$ that is isomorphic to $G$ as gyrogroups.
\end{proposition}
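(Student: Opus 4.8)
The plan is to establish the three ingredients of the statement—that $i_G$ is a groupoid homomorphism, that it is injective, and that its image is a subgyrogroup isomorphic to $G$—by reducing everything to pointwise computations in $G$, since by the defining operation (1) the addition on $G^\bullet$ is evaluated coordinatewise. The only external input needed is the gyrogroup structure of $G^\bullet$ furnished by Theorem \ref{Th1}, together with the fact that the gyroautomorphisms of $G^\bullet$ act pointwise.

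First I would check that $i_G$ is a groupoid homomorphism. Fix $x, y \in G$. For every $r \in J$ one has $(x^\bullet \oplus y^\bullet)(r) = x^\bullet(r) \oplus y^\bullet(r) = x \oplus y = (x \oplus y)^\bullet(r)$, so $x^\bullet \oplus y^\bullet = (x\oplus y)^\bullet$, that is $i_G(x \oplus y) = i_G(x) \oplus i_G(y)$. Injectivity is immediate: if $x^\bullet = y^\bullet$ as functions then, evaluating at any single $r \in J$, we get $x = y$. Hence $i_G$ is an injective groupoid homomorphism.

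Next I would argue that $i_G$ is in fact a gyrogroup monomorphism and that $i_G(G)$ is a subgyrogroup. Because $i_G$ is a groupoid homomorphism, standard gyrogroup algebra forces it to respect the remaining structure: from $i_G(e) = i_G(e \oplus e) = i_G(e) \oplus i_G(e)$ and the left cancellation law (Proposition \ref{Pro:gyr}(2)) we get $i_G(e) = e^\bullet$, and then $e^\bullet = i_G(\ominus x \oplus x) = i_G(\ominus x) \oplus i_G(x)$ gives $i_G(\ominus x) = \ominus(x^\bullet) = (\ominus x)^\bullet$. Thus $i_G(G) = \{x^\bullet : x \in G\}$ contains $e^\bullet$ and is closed under $\oplus$ and under $\ominus$. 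It remains to verify closure under the gyroautomorphisms of $G^\bullet$: for $x, y, z \in G$ the pointwise description of gyr in $G^\bullet$ (from the construction underlying Theorem \ref{Th1}) gives $(\mathrm{gyr}[x^\bullet, y^\bullet] z^\bullet)(r) = \mathrm{gyr}[x, y](z)$ for all $r$, so $\mathrm{gyr}[x^\bullet, y^\bullet] z^\bullet = (\mathrm{gyr}[x,y](z))^\bullet \in i_G(G)$. Consequently $i_G(G)$ is a subgyrogroup of $G^\bullet$, and $i_G$ is a gyrogroup isomorphism of $G$ onto it.

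I expect the main obstacle to be the gyroautomorphism clause rather than the homomorphism or injectivity checks, which are routine pointwise evaluations. To handle it cleanly one must know precisely how $\mathrm{gyr}[f,g]$ acts on $G^\bullet$; this is exactly the coordinatewise formula $(\mathrm{gyr}[f,g]h)(r) = \mathrm{gyr}[f(r), g(r)](h(r))$ that is implicit in Theorem \ref{Th1}, and invoking it is what makes the restriction to constant functions transparent. If one wished to avoid relying on that formula, an alternative is to derive it from the gyrator identity (Proposition \ref{Pro:gyr}(3)) applied coordinatewise, which again reduces to a pointwise identity in $G$.
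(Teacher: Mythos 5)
The paper states this proposition without proof, citing \cite{Wat}, so there is no in-paper argument to compare against; judged on its own, your proof is correct. The homomorphism and injectivity checks are exactly the right pointwise computations, and your derivations of $i_G(e)=e^\bullet$ and $i_G(\ominus x)=\ominus(x^\bullet)$ via left cancellation are standard and valid. One remark: the gyroautomorphism clause that you single out as the ``main obstacle'' is actually automatic and need not be verified separately. By the subgyrogroup criterion, a nonempty subset closed under $\oplus$ and $\ominus$ is already a subgyrogroup, because the gyrator identity (Proposition \ref{Pro:gyr}(3)) expresses $\mathrm{gyr}[a,b](c)$ entirely in terms of $\oplus$ and $\ominus$; equivalently, any groupoid homomorphism between gyrogroups automatically intertwines the gyroautomorphisms. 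You note this alternative yourself in the last sentence; it is in fact the cleaner route, and it also yields the pointwise formula $(\mathrm{gyr}[f,g]h)(r)=\mathrm{gyr}[f(r),g(r)](h(r))$ that you invoke, since $\oplus$ and $\ominus$ on $G^\bullet$ are defined coordinatewise. So the proof stands, with that step either omitted or derived rather than assumed from Theorem \ref{Th1}.
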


\begin{lemma}\label{LEM}\label{LEM3.12}
For a topological gyrogroup $G$, the function $i_G : G \rightarrow G^\bullet$ defined in Proposition \ref{Pro1} is
continuous and open at $e$.
\end{lemma}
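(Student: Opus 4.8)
The plan is to verify the two properties separately, working directly from the neighbourhood base $\{O(U,\varepsilon)\}$ of the identity $e^\bullet$ of $G^\bullet$. First I would record that $i_G(e)=e^\bullet$, the constant function with value $e$, which is the neutral element of $G^\bullet$; moreover, by Proposition \ref{Pro1} the map $i_G$ is a monomorphism, so $i_G(G)$ is a subgyrogroup and ``open at $e$'' is to be read relative to the subspace topology on $i_G(G)$.

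For continuity at $e$, I would fix a basic neighbourhood $O(U,\varepsilon)$ of $e^\bullet$ and simply take $U\in\mathcal{U}$ itself as the neighbourhood of $e$ in $G$. Indeed, if $x\in U$ then the constant function $x^\bullet$ satisfies $x^\bullet(r)=x\in U$ for every $r\in J$, so $\{r\in J: x^\bullet(r)\notin U\}=\emptyset$ has Lebesgue measure $0<\varepsilon$. Hence $i_G(x)=x^\bullet\in O(U,\varepsilon)$, giving $i_G(U)\subseteq O(U,\varepsilon)$; since this holds for every $\varepsilon>0$, continuity at $e$ follows at once.

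For openness at $e$, the key observation is the all-or-nothing behaviour of constant functions under $\mu$. Given an open $W\in\mathcal{U}$ with $e\in W$, I would claim $i_G(W)=O(W,1)\cap i_G(G)$, which exhibits $i_G(W)$ as the trace of a basic open set, hence as an open subset of $i_G(G)$. To see this, take $x\in G$ with $x^\bullet\in O(W,1)$. The set $\{r\in J: x^\bullet(r)\notin W\}$ equals $\emptyset$ when $x\in W$ and equals all of $J$ when $x\notin W$, so its measure is either $0$ or $1$; the strict inequality $\mu(\{r: x^\bullet(r)\notin W\})<1$ then forces $x\in W$, and thus $x^\bullet\in i_G(W)$. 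The reverse inclusion is the measure-$0$ computation already used for continuity. This establishes openness at $e$.

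I do not expect a serious obstacle, since both directions reduce to the single elementary fact that for a constant function the measure of its bad set jumps between $0$ and $1$ with nothing in between. The only points requiring care are to phrase openness correctly as a statement about the subspace $i_G(G)$ (rather than about $G^\bullet$, in which the image has empty interior), and to fix the threshold $\varepsilon=1$ so that the strict inequality defining $O(W,1)$ separates the two cases cleanly.
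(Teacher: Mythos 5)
Your proposal is correct and follows essentially the same route as the paper: continuity via $i_G(U)\subseteq O(U,\varepsilon)$ because the bad set of a constant function in $U$ is empty, and openness via the all-or-nothing measure argument showing $i_G(W)=O(W,c)\cap i_G(G)$ (the paper takes $c=\tfrac12$ where you take $c=1$, an immaterial difference). Your explicit remark that openness must be read relative to the subspace $i_G(G)$ matches what the paper actually proves and uses in Theorem \ref{the2}.
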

\begin{proof}
Let $\varepsilon\in(0, 1)$.
Note that for all $x\in G$, $O(V, \varepsilon)\in \mathcal{N}(e^\bullet)$,
\begin{align*}
\\&y^\bullet\in O(V,\varepsilon)  \quad
\\& \Leftrightarrow \mu(\{r\in J|y^\bullet(r)\notin V\})<\varepsilon \quad
\\&\Leftrightarrow\mu(\{r\in J|y\notin V\})<\varepsilon  \quad
\\&\Leftrightarrow y\in V.\quad\quad
\end{align*}
Then, for each $O(V, \varepsilon)\in \mathcal{N}(e^\bullet)$, we have $i_G(V)\subseteq O(V, \varepsilon).$
Thus, $i_G$ is continuous at $e$.
To show that $i_G$ is open at $e$, let $V\in \mathcal{N}(e)$.
For any $g\in V$, $i_G(g)=g^\bullet\in i_G(G)$. Note that
$\{r\in J|g^\bullet(r)\notin V\}=\{r\in J|g\notin V\}$.
Therefore, $\mu(\{r\in J|g^\bullet(r)\notin V\})<\frac{1}{2}$. We have $i_G(V)\subseteq i_G(G)\cap O(V, \frac{1}{2})$
For any $f\in i_G(G)\cap O(V, \frac{1}{2})$, there exists $g\in G$ such that $f=g^\bullet$.
We assert that $g\in V$. If $g\notin V$, $\mu(\{r\in J|g^\bullet(r)\notin V\})=\mu(\{r\in J|g\notin V\})=1$, which is a contradiction.
This shows that $i_G(G)\cap O(V, \frac{1}{2})\subseteq i_G(V)$, and so $i_G(V)=i_G(G)\cap O(V, \frac{1}{2})$.
Thus, $i_G$ is open at $e$.
\end{proof}

\begin{theorem}\label{the2}
For any topological gyrogroup $G$, the function $i_G$ defined in Proposition \ref{Pro1} is a
topological embedding and
$i_G(G)$ forms a closed subgyrogroup of $G^\bullet$.
\end{theorem}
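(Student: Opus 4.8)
The plan is to prove the two assertions separately, using the tools already assembled. For the embedding, recall that by Proposition \ref{Pro1} the map $i_G$ is an injective gyrogroup homomorphism, and by Lemma \ref{LEM3.12} it is continuous and open at the identity $e$. Since $i_G$ is a homomorphism between topological gyrogroups, Lemma \ref{LEM4.11} promotes these two local properties to global ones, so $i_G$ is continuous and open at every point. Regarding $i_G$ as a map onto $i_G(G)$ with the subspace topology, it is then a continuous, open bijection and hence a homeomorphism onto its image; this is exactly what it means for $i_G$ to be a topological embedding.

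For closedness I would show that $G^\bullet\setminus i_G(G)$ is open, noting first that $i_G(G)$ is precisely the set of constant step functions. Fix $f\in G^\bullet\setminus i_G(G)$. Then $f$ is not constant, so from a partition witnessing that $f$ is a step function I can select two subintervals $I_1,I_2\subseteq J$ of positive lengths $\ell_1,\ell_2$ on which $f$ is constantly equal to $p$ and $q$ respectively, with $p\neq q$. The goal is to produce a basic neighborhood $f\oplus O(U',\varepsilon)$ of $f$ missing every constant function. Put $\varepsilon=\min\{\ell_1,\ell_2\}$. If some $g^\bullet$ lay in $f\oplus O(U',\varepsilon)$, then $\ominus f\oplus g^\bullet\in O(U',\varepsilon)$, i.e. $\mu(\{r\in J:\ominus f(r)\oplus g\notin U'\})<\varepsilon$; since on $I_1$ the value $\ominus p\oplus g$ is constant, the bad set either contains all of $I_1$ (measure $\geq\ell_1\geq\varepsilon$, impossible) or is disjoint from it, forcing $\ominus p\oplus g\in U'$, that is $g\in p\oplus U'$ by the left cancellation law. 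The same argument on $I_2$ gives $g\in q\oplus U'$. Thus it suffices to choose $U'$ so that $(p\oplus U')\cap(q\oplus U')=\emptyset$.

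The crux is therefore to separate the two cosets, the gyro-analogue of the classical fact that in a Hausdorff group small translates $pU$ and $qU$ can be made disjoint. Since $p\neq q$ we have $\ominus q\oplus p\neq e$, so condition (7) of Proposition \ref{the} furnishes $U\in\mathcal{U}$ with $\ominus q\oplus p\notin U\boxminus U$; as in the proof of Claim 5 there, the second right cancellation law (Theorem \ref{The1}(3)) shows this implies $((\ominus q\oplus p)\oplus U)\cap U=\emptyset$. Next I would invoke conditions (5) and (4) of Proposition \ref{the} to pick $U'\in\mathcal{U}$ with $U'\subseteq U$ and $\text{gyr}[\ominus q,p]U'\subseteq U$. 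If some $g$ belonged to both $p\oplus U'$ and $q\oplus U'$, writing $g=p\oplus u_1=q\oplus u_2$ with $u_1,u_2\in U'$ and applying the left gyroassociative law yields $u_2=\ominus q\oplus(p\oplus u_1)=(\ominus q\oplus p)\oplus\text{gyr}[\ominus q,p]u_1$, which lies in $((\ominus q\oplus p)\oplus U)\cap U$, a contradiction. Hence the cosets are disjoint, no constant function meets $f\oplus O(U',\varepsilon)$, and $i_G(G)$ is closed.

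I expect this last coset-separation step to be the main obstacle: the gyrator twist in $\ominus q\oplus(p\oplus u_1)$ blocks a naive translation argument, and the point is to route the estimate through $U\boxminus U$ via condition (7) while absorbing $\text{gyr}[\ominus q,p]$ using condition (5). The measure-theoretic forcing and the embedding part are then routine given the earlier lemmas.
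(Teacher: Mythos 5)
Your proposal is correct and follows essentially the same route as the paper: the embedding via Proposition \ref{Pro1} together with Lemmas \ref{LEM4.11} and \ref{LEM3.12}, and closedness by choosing two subintervals where $f$ takes distinct values, setting $\varepsilon$ to the minimum of their lengths, and forcing any constant function in $f\oplus O(U',\varepsilon)$ into two disjoint translates. The only difference is that you explicitly derive the separation $(p\oplus U')\cap(q\oplus U')=\emptyset$ from conditions (5) and (7) of Proposition \ref{the}, whereas the paper simply asserts the existence of such a neighborhood from Hausdorffness; this is a welcome filling-in of a detail, not a different argument.
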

\begin{proof}
By Lemma \ref{LEM4.11} and Lemma \ref{LEM3.12}, it is easy to see that $i_G$ is a
topological embedding.
The remaining part is to show that $i_G(G)$ is a closed subset of $G^\bullet$.
Take any $f\in G^\bullet\setminus i_G(G)$. Then there are numbers $a_1, a_2, a_3, a_4$
satisfying  $0\leq a_1<a_2<a_3<a_4\leq 1$ such that $f$ is constant on
$[a_1, a_2)$ and $[a_3, a_4)$ with $f(a_1)=x_1\neq x_2=f(a_3)$.
Therefore, there is an open set $V\in \mathcal{N}(e)$ such that $x_1\oplus V\cap x_2\oplus V=\emptyset$.
Put $\varepsilon=\text{~min~}\{a_2-a_1, a_4-a_3\}$. We claim that $i_G(G)\cap(f\oplus O(V, \varepsilon))=\emptyset$.
Otherwise, if $x^\bullet\in f\oplus O(V,\varepsilon)$ for some $x\in G$.
Then, $\ominus f\oplus x^\bullet\in O(V,\varepsilon)$.
We have $\mu(\{r\in J|\ominus(f(r))\oplus x\notin V\})<\varepsilon$.
So there exists $r_1\in[a_1, a_2)$ and $r_2\in[a_3, a_4)$ such that $\ominus(f(r_1))\oplus x\in V$ and $\ominus(f(r_2))\oplus x\in V$
which means $x\in (f(r_1))\oplus V=x_1\oplus V$ and $x\in (f(r_2))\oplus V=x_2\oplus V$. Thus, $x_1\oplus V\cap x_2\oplus V\neq\emptyset$, a contradiction.
Hence, $i_G(G)\cap(f\oplus O(V, \varepsilon))=\emptyset$.
\end{proof}





\section{Other results on the extension of topological gyrogroups}
In this section, we prove several topological properties shared by $G$ and $G^\bullet$, where $G$ is a
topological gyrogroup.  Proposition \ref{the4.5} shows that any two open bases of $G$ at the gyrogroup identity
generate the same topology on $G^\bullet$.

\begin{proposition}\label{the4.5}
Let $G$ be a topological gyrogroup with open bases $\mathcal{N}_1(e)$ and $\mathcal{N}_2(e)$ at $e$. Then, the
two bases
$$\mathcal{B}_1=\{f \oplus O(V, \varepsilon)| V\in \mathcal{N}_1(e), \varepsilon > 0 \text{~and~} f\in G^\bullet\}$$ and
$$\mathcal{B}_1=\{f \oplus O(V, \varepsilon)| V\in \mathcal{N}_2(e), \varepsilon > 0 \text{~and~} f\in G^\bullet\}$$
generate the same topology on $G^\bullet$.
\end{proposition}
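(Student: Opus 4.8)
The plan is to show that the two topologies generated by $\mathcal{B}_1$ and $\mathcal{B}_2$ coincide by proving that each base refines the other at the level of basic neighborhoods of the identity $e^\bullet$. Since every topological gyrogroup is homogeneous (left translations $L_f$ are homeomorphisms, as follows from the continuity of $\oplus$ together with the left cancellation law), it suffices to compare the two topologies locally at $e^\bullet$: it is enough to show that for every $O(V_1,\varepsilon)$ with $V_1\in\mathcal{N}_1(e)$ and $\varepsilon>0$, there exist $V_2\in\mathcal{N}_2(e)$ and $\delta>0$ with $O(V_2,\delta)\subseteq O(V_1,\varepsilon)$, and symmetrically with the roles of $\mathcal{N}_1(e)$ and $\mathcal{N}_2(e)$ interchanged. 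By symmetry I would only write out one direction.

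\medskip

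For the inclusion, fix $V_1\in\mathcal{N}_1(e)$ and $\varepsilon>0$. Because $\mathcal{N}_1(e)$ and $\mathcal{N}_2(e)$ are both open bases at the same point $e$ for the same topology on $G$, and $V_1$ is an open neighborhood of $e$, there exists $V_2\in\mathcal{N}_2(e)$ with $V_2\subseteq V_1$. I would then take $\delta=\varepsilon$ and verify $O(V_2,\varepsilon)\subseteq O(V_1,\varepsilon)$ directly from the definition of $O(\cdot,\cdot)$: if $h\in O(V_2,\varepsilon)$, then $\mu(\{r\in J\mid h(r)\notin V_2\})<\varepsilon$, and since $V_2\subseteq V_1$ we have $\{r\in J\mid h(r)\notin V_1\}\subseteq\{r\in J\mid h(r)\notin V_2\}$, so $\mu(\{r\in J\mid h(r)\notin V_1\})<\varepsilon$, giving $h\in O(V_1,\varepsilon)$. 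This is exactly the monotonicity observation already recorded in item (4) of the topology theorem, namely that $U\subseteq V$ implies $O(U,\varepsilon)\subseteq O(V,\varepsilon)$.

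\medskip

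Having both inclusions at $e^\bullet$, I would conclude that the identity map $\mathrm{id}:(G^\bullet,\mathcal{T}_{\mathcal{U}_1^\bullet})\to(G^\bullet,\mathcal{T}_{\mathcal{U}_2^\bullet})$ is a homeomorphism: it is continuous and open at $e^\bullet$ by the two inclusions above, and since it is a gyrogroup homomorphism (indeed the identity on the underlying gyrogroup $G^\bullet$), Lemma \ref{LEM4.11} promotes continuity and openness at $e^\bullet$ to continuity and openness at every point. Hence the two topologies agree. I expect the only substantive point to be the reduction to the identity via homogeneity and Lemma \ref{LEM4.11}; the containment $O(V_2,\varepsilon)\subseteq O(V_1,\varepsilon)$ itself is immediate from set monotonicity of the measure condition and requires no appeal to any of the Pontrjagin-type conditions, so there is no real obstacle beyond organizing the symmetric argument cleanly.
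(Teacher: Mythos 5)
Your proof is correct and follows essentially the same route as the paper: pick $V_2\in\mathcal{N}_2(e)$ with $V_2\subseteq V_1$ and use the monotonicity $V_2\subseteq V_1\Rightarrow O(V_2,\varepsilon)\subseteq O(V_1,\varepsilon)$, then symmetrize. The only difference is that you spell out the reduction to neighborhoods of $e^\bullet$ via homogeneity and Lemma~\ref{LEM4.11}, a step the paper's proof leaves implicit.
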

\begin{proof}
Let $V_1\in \mathcal{N}_1(e)$ and $\varepsilon>0$. There is a set $V_2\in \mathcal{N}_2(e)$ such that $V_2\subseteq V_1$.
Hence, $O(V_2, \varepsilon)\subseteq O(V_1, \varepsilon)$.
This shows that $\mathcal{T}_{\mathcal{B}_1} \subseteq \mathcal{T}_{\mathcal{B}_2}$. In the same way, we have $\mathcal{T}_{\mathcal{B}_2} \subseteq \mathcal{T}_{\mathcal{B}_1}$.
This proves $\mathcal{T}_{\mathcal{B}_2}=\mathcal{T}_{\mathcal{B}_1}$.
\end{proof}

We extend the conclusion of Theorem 3 in \cite{Wat} to the topological version.

\begin{theorem}\label{The4.7}
Let $G$ and $H$ be topological gyrogroups. If $\varphi:G\rightarrow H$ is a continuous homomorphism,
then the function $\varphi^\bullet:G^\bullet\rightarrow H^\bullet$, defined by $\varphi^\bullet(f) =\varphi\circ f$ for all $f\in G^\bullet$, is continuous. If $\varphi$ is open, then so is $\varphi^\bullet$.
\end{theorem}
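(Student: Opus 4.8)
The plan is to treat $\varphi^\bullet$ as a gyrogroup homomorphism and to reduce both the continuity and the openness assertions to the single point $e^\bullet$, after which each reduces to a short measure-theoretic estimate on the basic sets $O(U,\varepsilon)$.

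First I would check the purely algebraic fact that $\varphi^\bullet$ is a homomorphism: for $f,g\in G^\bullet$ and $r\in J$ we have $\varphi^\bullet(f\oplus g)(r)=\varphi(f(r)\oplus g(r))=\varphi(f(r))\oplus\varphi(g(r))$, so $\varphi^\bullet(f\oplus g)=\varphi^\bullet(f)\oplus\varphi^\bullet(g)$, and $\varphi^\bullet(e^\bullet)=e^\bullet$ because $\varphi(e)=e$. (Here one also notes that $\varphi\circ f$ is again a step function, so $\varphi^\bullet$ does map $G^\bullet$ into $H^\bullet$.) Being a homomorphism between topological gyrogroups, $\varphi^\bullet$ admits the factorization $\varphi^\bullet=L_{\varphi^\bullet(g)}\circ\varphi^\bullet\circ L_{\ominus g}$ used in the proof of Lemma \ref{LEM4.11}, where $L_{\ominus g}$ and $L_{\varphi^\bullet(g)}$ are homeomorphisms carrying $g$ to $e^\bullet$ and $e^\bullet$ to $\varphi^\bullet(g)$. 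Since this factorization transports continuity (respectively openness) at $e^\bullet$ to continuity (respectively openness) at an arbitrary $g$, and does so for each property independently of the other, it suffices to verify continuity at $e^\bullet$ for the first claim and openness at $e^\bullet$ for the second.

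For continuity at $e^\bullet$, given a basic neighbourhood $O(W,\varepsilon)$ of $e^\bullet$ in $H^\bullet$ I would use continuity of $\varphi$ at $e$ to choose an open $U\ni e$ in $G$ with $\varphi(U)\subseteq W$; then the implication $f(r)\in U\Rightarrow\varphi(f(r))\in W$ gives $\{r\in J\mid\varphi(f(r))\notin W\}\subseteq\{r\in J\mid f(r)\notin U\}$, so $\varphi^\bullet(O(U,\varepsilon))\subseteq O(W,\varepsilon)$. For openness at $e^\bullet$, given $O(U,\varepsilon)$ in $G^\bullet$ I would use openness of $\varphi$ to make $\varphi(U)$ an open neighbourhood of $e$ in $H$ and then prove the identity $\varphi^\bullet(O(U,\varepsilon))=O(\varphi(U),\varepsilon)\cap\varphi^\bullet(G^\bullet)$: the inclusion $\subseteq$ is the estimate just made, while for $\supseteq$ one takes $h$ in the right-hand side, which is a step function valued in $\varphi(G)$, and builds a preimage $f$ by selecting on each partition interval a preimage lying in $U$ whenever the value of $h$ there lies in $\varphi(U)$ and an arbitrary preimage otherwise; then $\varphi\circ f=h$ and $\{r\mid f(r)\notin U\}\subseteq\{r\mid h(r)\notin\varphi(U)\}$, so $f\in O(U,\varepsilon)$.

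The measure bookkeeping is routine, so I expect the main obstacle to lie in the openness half. Two points need care there: the value-by-value construction of an honest preimage step function $f\in G^\bullet$, and the precise meaning of ``open''. Since $\varphi^\bullet(G^\bullet)=(\varphi(G))^\bullet$ is not open in $H^\bullet$ unless $\varphi$ is surjective, the correct reading --- consistent with the embedding interpretation of Lemma \ref{LEM3.12} --- is that $\varphi^\bullet$ is open onto its image, and the displayed identity delivers exactly that; when $\varphi$ is surjective it gives $\varphi^\bullet(O(U,\varepsilon))=O(\varphi(U),\varepsilon)$, open in all of $H^\bullet$.
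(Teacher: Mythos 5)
Your proof is correct and follows essentially the same route as the paper's: continuity is obtained by pulling a basic neighbourhood $O(V,\varepsilon)$ of $e^\bullet$ in $H^\bullet$ back to $O(U,\varepsilon)$ with $\varphi(U)\subseteq V$, and openness by building, for a step function $g$ close to the image, a preimage step function interval by interval, choosing the preimage inside $U$ whenever the corresponding value of $g$ lies in $\varphi(U)$. The one substantive difference is in the openness half, and it is in your favour. The paper asserts the exact identity $O_H(\varphi(V),\varepsilon)=\varphi^\bullet(O_G(V,\varepsilon))$, and to prove the inclusion $\supseteq$ it picks, for an arbitrary $g\in O_H(\varphi(V),\varepsilon)$, elements $x_k\in G$ with $\varphi(x_k)=h_k$ for every value $h_k$ of $g$; this is only possible when every $h_k$ lies in $\varphi(G)$, i.e.\ the argument silently assumes $\varphi$ is surjective. (Indeed, if $\varphi(G)\neq H$ then $\varphi^\bullet(G^\bullet)=(\varphi(G))^\bullet$ is not open in $H^\bullet$: every basic neighbourhood $h\oplus O(W,\delta)$ of a point $h\in(\varphi(G))^\bullet$ contains step functions taking values outside $\varphi(G)$ on a set of small positive measure, so $\varphi^\bullet$ cannot be an open map into $H^\bullet$ in the literal sense.) Your identity $\varphi^\bullet(O(U,\varepsilon))=O(\varphi(U),\varepsilon)\cap\varphi^\bullet(G^\bullet)$ is the correct general statement: it yields openness of $\varphi^\bullet$ onto its image and recovers the paper's conclusion verbatim when $\varphi$ is surjective. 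Your reduction of both properties to the single point $e^\bullet$ via the factorization of Lemma \ref{LEM4.11} is also tidier than the paper's pointwise continuity computation, though mathematically equivalent. One small point worth making explicit: $O(\varphi(U),\varepsilon)$ need not belong to the fixed base $\mathcal{U}^\bullet$ of $H^\bullet$, so you should invoke Proposition \ref{the4.5} (as the paper does at the end of its proof) to see that it is nevertheless open in $H^\bullet$.
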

\begin{proof}
Let $f\in G^\bullet$
and $\varphi^\bullet( f ) \oplus O_H(V, \varepsilon)$ be a basic open neighborhood of $\varphi^\bullet(f)$ in $H^\bullet$. Since $\varphi$ is continuous, there is
$U\in \mathcal{N}(e)$ such that $\varphi(U) \subseteq V$. For any $h\in f\oplus O_G(U, \varepsilon)$, we have $\ominus f\oplus h\in O_G(U, \varepsilon)$.
Then,
\begin{align*}
\\&\{r\in J|\varphi^\bullet(\ominus f\oplus h)(r)\notin V\}  \quad
\\& =\{r\in J|\varphi\circ(\ominus f\oplus h)(r)\notin V\}  \quad
\\&\subseteq \{r\in J|(\ominus f\oplus h)(r)\notin U\}   \quad
\end{align*}
It follows that $\mu(\{r\in J|\varphi^\bullet(\ominus f\oplus h)(r)\notin V\})<\varepsilon$.
Thus, $\varphi^\bullet(\ominus f\oplus h)\in O_H(V, \varepsilon)$, which means $\varphi^\bullet(h)\in \varphi^\bullet(f)\oplus O_H(V, \varepsilon)$.
This shows that $\varphi^\bullet$ is continuous.

To show that $\varphi^\bullet$ is open, let
$O_G(V, \varepsilon)$ be a basic open neighborhood at $e^\bullet$ and let $f\in O_G(V, \varepsilon)$. Note that
$$\{r\in J|\varphi^\bullet(f)(r)\notin \varphi(V)\}=\{r\in J|\varphi\circ f(r)\notin \varphi(V)\}\subseteq \{r\in J|f(r)\notin V\}.$$
It follows that $\mu(\{r\in J|\varphi^\bullet(f)(r)\notin \varphi(V)\})<\varepsilon$, which means $\varphi^\bullet(f)\in O_H(\varphi(V), \varepsilon).$
This shows that $\varphi^\bullet(O_G(V, \varepsilon))\subseteq O_H(\varphi(V), \varepsilon)$.
 Let $g\in O_H(\varphi(V), \varepsilon)$.
Then, there is a partition $\{a_0, a_1, ..., a_n\}$ of $J$
such that $g([a_k, a_{k+1}))=h_k$ for $k= 0, 1,..., n-1$.
We get $x_k\in G$, $k= 0, 1,..., n-1$, such that $\varphi(x_k)=h_k$ and if $h_k\in \varphi(V)$, $x_k\in V$.

Define a function $f:J\rightarrow G$ by
$f(r)=x_k$, $r\in [a_k,a_{k+1}).$
Clearly, $f\in G^\bullet$.
Let $r\in J$. Then, $r\in[a_k, a_{k+1})$ for some $k$ and $\varphi^\bullet(f)(r) =\varphi(( f (r)) = \varphi(x_k)= g(r)$.

Note that
$$\{r\in J|f(r)\notin V\}\subseteq\{r\in J|g(r)\notin \varphi(V)\}=\{r\in J|\varphi^\bullet(f)(r)\notin \varphi(V)\}.$$
Since $g\in O_H(\varphi(V), \varepsilon)$, we have $\mu(\{r\in J|f(r)\notin V\})\leq \mu(\{r\in J|\varphi^\bullet(f)(r)\notin \varphi(V)\})<\varepsilon$.
Thus, $f\in O_G(V, \varepsilon)$.
This shows that $O_H(\varphi(V), \varepsilon)\subseteq \varphi^\bullet(O_G(V, \varepsilon))$,  and so $O_H(\varphi(V), \varepsilon)=\varphi^\bullet(O_G(V, \varepsilon)).$
It follows from Proposition \ref{the4.5} that $\varphi^\bullet$ is open.
\end{proof}

The following theorem is a topological version of \cite[Theorem 4]{Wat}.
\begin{theorem}
Let $G$ be a topological gyrogroup with open base $\mathcal{N}(e)$ at $e$. If $d$ is a bounded pseudometric
(respectively, metric) on $G$, then $d$ admits an extension to a bounded pseudometric (respectively, metric) $d^\bullet$ on
$G^\bullet$ such that
\begin{enumerate}
\item[(i)] if $d$ is continuous, then so is $d^\bullet$;
\item[(ii)] if $d$ is a metric generating the topology of $G$, then $d^\bullet$ also generates the topology of $G^\bullet$.
\end{enumerate}
\end{theorem}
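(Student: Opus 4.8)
The plan is to define the extension by integration against Lebesgue measure: for $f,g\in G^\bullet$ set
$$d^\bullet(f,g)=\int_J d(f(r),g(r))\,d\mu(r).$$
First I would verify this is well defined. Choosing a common partition on which both $f$ and $g$ are constant, the map $r\mapsto d(f(r),g(r))$ is itself a step function, hence bounded and $\mu$-measurable, so the integral exists and is finite. The pseudometric axioms then follow by integrating the corresponding pointwise statements for $d$: non-negativity and symmetry are immediate, and the triangle inequality passes through the integral; if $d\le M$ then $d^\bullet\le M\mu(J)=M$, so $d^\bullet$ is a bounded pseudometric. That $d^\bullet$ extends $d$ is checked on the constant functions $x^\bullet=i_G(x)$, since $d^\bullet(x^\bullet,y^\bullet)=\int_J d(x,y)\,d\mu=d(x,y)$. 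If moreover $d$ is a metric, then $d^\bullet(f,g)=0$ forces the non-negative step function $r\mapsto d(f(r),g(r))$ to vanish almost everywhere; being constant on each interval of a partition, it then vanishes everywhere, so $f(r)=g(r)$ for all $r$ and $f=g$, whence $d^\bullet$ is a metric.

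For (i), assume $d$ is continuous. Using the reverse triangle inequality $|d^\bullet(f,g)-d^\bullet(f_0,g_0)|\le d^\bullet(f,f_0)+d^\bullet(g,g_0)$, it suffices to show that $f\mapsto d^\bullet(f,f_0)$ can be made small on a basic neighborhood of each $f_0$. Fix $f_0$ with value $c_k$ on $[a_k,a_{k+1})$ and let $\eta>0$. Since $d$ is continuous and $d(c_k,c_k)=0$, for each of the finitely many values $c_k$ there is $U_k\in\mathcal{N}(e)$ with $d(c_k,y)<\eta/2$ whenever $y\in c_k\oplus U_k$. By Proposition \ref{the}(4) choose $U\in\mathcal{N}(e)$ with $U\subseteq\bigcap_k U_k$, and pick $\varepsilon<\eta/(2M)$ where $d\le M$. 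If $f\in f_0\oplus O(U,\varepsilon)$, then $A=\{r:\ominus f_0(r)\oplus f(r)\notin U\}$ has $\mu(A)<\varepsilon$, and for $r\notin A$ the left cancellation law gives $f(r)=f_0(r)\oplus(\ominus f_0(r)\oplus f(r))\in c_k\oplus U_k$, so $d(f(r),f_0(r))<\eta/2$. Splitting the integral over $A$ and its complement yields $d^\bullet(f,f_0)<\eta/2+M\varepsilon<\eta$, which proves continuity of $d^\bullet$.

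For (ii), suppose $d$ is a metric generating the topology of $G$; then $d$ is continuous, so by (i) every $d^\bullet$-ball is open in the topology of $G^\bullet$, giving one inclusion of topologies. For the reverse inclusion I would show that each basic neighborhood $f_0\oplus O(U,\varepsilon)$ contains a $d^\bullet$-ball about $f_0$. Writing $f_0$ with values $c_k$ as above, since $d$ generates the topology each neighborhood $c_k\oplus U$ contains a $d$-ball $B_d(c_k,\rho_k)$; with $\rho=\min_k\rho_k>0$, the inequality $d(f_0(r),f(r))<\rho$ forces $\ominus f_0(r)\oplus f(r)\in U$. The crucial step is a Markov-type estimate converting an integral bound into a measure bound: $\mu(\{r:d(f_0(r),f(r))\ge\rho\})\le \rho^{-1}d^\bullet(f,f_0)$. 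Taking $\eta=\rho\varepsilon/2$, any $f$ with $d^\bullet(f,f_0)<\eta$ satisfies $\mu(\{r:\ominus f_0(r)\oplus f(r)\notin U\})\le\eta/\rho<\varepsilon$, so $B_{d^\bullet}(f_0,\eta)\subseteq f_0\oplus O(U,\varepsilon)$, and the two topologies coincide.

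The main obstacle I anticipate is precisely this part (ii): because the gyrogroup operation is not translation invariant, one cannot transfer local estimates by a single translation, and it is the finiteness of the values of the step function $f_0$ (allowing a uniform $\rho$ over the finitely many pieces) together with the Markov inequality turning the integral bound $d^\bullet(f,f_0)<\eta$ into a bound on the measure of the exceptional set that makes the passage between the integral metric $d^\bullet$ and the defining neighborhoods $O(U,\varepsilon)$ work.
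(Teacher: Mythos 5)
Your proposal is correct and follows essentially the same route as the paper: your integral $\int_J d(f(r),g(r))\,d\mu$ is exactly the paper's weighted sum $\sum_k(a_{k+1}-a_k)d(x_k,y_k)$ over a common partition, your argument for (i) mirrors the paper's splitting of the sum into the small exceptional set and its complement, and your Markov-type estimate in (ii) is precisely the paper's inequality $\delta\sum_{i\in P}(b_{i+1}-b_i)\le d^\bullet(f,g)<\varepsilon\delta$. The only (harmless) difference is that you verify the pseudometric/metric axioms and the extension property directly, whereas the paper delegates that part to \cite[Theorem 4]{Wat}.
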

\begin{proof}
It is obvious that a pseudometric
(respectively, metric) $d$ on $G$ admits an extension to a pseudometric (respectively, metric) $d^\bullet$ on
$G^\bullet$ by of \cite[Theorem 4]{Wat}.

(i) Suppose that $d$ is continuous and bounded.
To prove the continuity of $d^\bullet$, we
firstly show that for all $f\in G^\bullet$, $\varepsilon> 0$, there are a set $V \in\mathcal{N}(e)$ and a number $\delta> 0$ such that
$f\oplus O(V,\delta)\subseteq B_{d^\bullet}(f,\varepsilon).$
Suppose that $f(J)=\{z_1, z_2,..., z_n\}$. For each $i\in\{1, 2, ..., n\}$, because $d$ is continuous and
$d(z_i, z_i) =0\in[0,\frac{\varepsilon}{2})$), there is an open neighborhood $V_i$ of $z_i$ with $d(V_i \times V_i)\subseteq[0,\frac{\varepsilon}{2})$.
In particular, $d(\{z_i\}\times V_i)\subseteq [0,\frac{\varepsilon}{2})$ for all $i\in\{1, 2, ..., n\}$.
Since $V_i$ is an open neighborhood of $z_i$, we have $\ominus z_i\oplus V_i$ is an open neighborhood of $e$, and so there is a set $V\in \mathcal{N}(e)$ such that
$V\subseteq\bigcap_{i=1}^{n}(\ominus z_i\oplus V_i).$
If $v\in V$ and $z_i\in\{z_1, z_2,..., z_n\}$, we have $z_i\oplus v\in z_i\oplus V \subseteq z_i\oplus(\ominus z_i\oplus V_i) = V_i$. It follows that
$d(z_i,z_i\oplus v)<\frac{\varepsilon}{2}$ for all $v\in V$ and $z_i\in\{z_1, z_2,..., z_n\}$.
We claim that $f\oplus O(V,\frac{\varepsilon}{2})\subseteq B_{d^\bullet}(f,\varepsilon)$, that is, if $g\in O(V,\frac{\varepsilon}{2})$, then $d^\bullet( f , f\oplus g) < \varepsilon.$
Let $g\in O(V,\frac{\varepsilon}{2})$. Then, there is a partition $\{a_0, a_1, ... , a_m\}$ of $J$ such that $f$ and $g$ are constant on each interval $[a_k, a_{k+1}).$
For each $k \in\{0, 1, ...,m -1\}$, let $x_k$ and
$y_k$ be the values of $f$ and $g$ on $[a_k, a_{k+1})$, respectively. Note that $\{x-1, x_2, ..., x_{m-1}\}=\{z_1, z_2, ..., z_n\}$.
Set $L =\{k \in\{0, 1,...,m-1\}| y_k\in V\}$ and $M = \{0, 1,...,m-1\}\setminus L$. Note that if $k \in L$, then $d(x_k, x_k\oplus y_k)<\frac{\varepsilon}{2}$
and that if $k\in M$, then $d(x_k, x_k\oplus y_k) < 1$. Furthermore, we have $\{r\in J|g(r)\notin V\}=\bigcup_{k\in M}[a_k, a_{k+1})$, and so $\sum_{k\in M}(a_{k+1}-a_k)<\frac{\varepsilon}{2}$.
By definition of $d^\bullet$,
\begin{align*}
\\&d^\bullet(f,f\oplus g)=\sum_{k=0}^{m-1}(a_{k+1}-a_k)d(x_k, x_k\oplus y_k) \quad
\\& =\sum_{k\in L}(a_{k+1}-a_k)d(x_k, x_k\oplus y_k)+\sum_{k\in M}(a_{k+1}-a_k)d(x_k, x_k\oplus y_k) \quad
\\&<\sum_{k\in L}(a_{k+1}-a_k)\frac{\varepsilon}{2}+\sum_{k\in M}(a_{k+1}-a_k) \quad
\\&<\frac{\varepsilon}{2}+\frac{\varepsilon}{2}=\varepsilon. \quad\quad
\end{align*}
Therefore, $f\oplus O(V,\frac{\varepsilon}{2})\subseteq B_{d^\bullet}(f,\varepsilon)$.
Let $( f, g)\in G^\bullet \times G^\bullet$ and let $\varepsilon> 0$. Then, there are basic open sets $f\oplus O(U_1,\delta_1)$ and $g\oplus O(U_2,\delta_2)$
such that $f\in f\oplus O(U_1,\delta_1)\subseteq B_{d^\bullet}(f,\frac{\varepsilon}{2})$ and $g\in g\oplus O(U_2,\delta_2)\subseteq B_{d^\bullet}(g,\frac{\varepsilon}{2})$
If $f'\in f\oplus O(U_1,\delta_1)$ and $g'\in g\oplus O(U_2,\delta_2)$, then
\begin{align*}
\\&d^\bullet(f',g')\leq d^\bullet(f',f)+d^\bullet(f,g')\quad
\\& \leq d^\bullet(f',f)+d^\bullet(f,g)+d^\bullet(g,g') \quad
\\&<d^\bullet(f,g)+\varepsilon. \quad\quad
\end{align*}
On the other hand,
\begin{align*}
\\&d^\bullet(f,g)\leq d^\bullet(f,f')+d^\bullet(f',g)\quad
\\& \leq d^\bullet(f,f')+d^\bullet(f',g')+d^\bullet(g',g) \quad
\\&<d^\bullet(f',g')+\varepsilon. \quad\quad
\end{align*}
This shows that $d^\bullet(f,g)-\varepsilon < d^\bullet(f',g')<d^\bullet(f,g)+\varepsilon$, and so $d^\bullet(f',g')$ is in $(d^\bullet(f,g)-\varepsilon, d^\bullet(f,g)+\varepsilon)$.
Hence, $d^\bullet$ is continuous at $(f , g)$.

(ii) Finally, suppose that $d$ is a metric on $G$ generating the topology of
$G$. Let $f\in G^\bullet$ and let $f\oplus O(V, \varepsilon)$ be a basic open neighborhood of $f$ in $G^\bullet$.
Suppose that $f(J)=\{u_1, u_2,..., u_n\}$.
Then, there exists a number $\delta > 0$ such that $B_d(u_k, \delta)\subseteq u_k \oplus V$ for all $k = 1, 2,... n$. Note that if $1\leq k\leq n$
and $y\in G \setminus (u_k \oplus V)$, then $d(u_k, y) \geq \delta$. Put $\delta_0= \varepsilon\delta$ We claim that $B_{d^\bullet}(f, \delta_0)\subseteq f\oplus O(V, \varepsilon)$.
Let $g\in B_{d^\bullet}(f, \delta_0)$. Then, there exists a partition $\{b_0, b_1, ..., b_N\}$ of $J$ such that $f$ and $g$ are constant on
each interval $[b_i, b_{i+1})$. For each $i \in\{0, 1, ..., N-1\}$, let $x_i$ and $y_i$ be the values of $f$ and $g$ on $[b_i, b_{i+1})$,
respectively. Note that $\{u_1, u_2,..., u_n\}= \{x_0, x_1,..., x_{N-1}\}$. Set $P = \{i\in \{0, 1,... , N-1|y_i \notin x_i\oplus V\}.$
If $i\in P$, then $y_i\in G\setminus(x_i\oplus V)$, and so $d(x_i, y_i)\geq\delta$. It follows that
\begin{align*}
\\&\delta\sum_{i\in P}(b_{i+1}-b_i)=\sum_{i\in P}(b_{i+1}-b_i)\delta \quad
\\& \leq\sum_{i\in P}(b_{i+1}-b_i)d(x_i,y_i) \quad
\\&\leq\sum_{0\leq i<N-1}(b_{i+1}-b_i)d(x_i,y_i)  \quad
\\&=d^\bullet(f,g)  \quad
\\&<\delta_0. \quad\quad
\end{align*}
Hence, $\sum_{i\in P}(b_{i+1}-b_i)<\frac{\delta_0}{\delta}=\varepsilon$. Recall that $P = \{i\in\{0, 1, ..., N-1\}| g(b_i) \notin f (b_i)\oplus V\}$. We have
$\{r\in J|\ominus(f(r))\oplus g(r)\notin V\} =\bigcup_{i\in P}[b_i, b_{i+1}).$
Hence, $\mu(\{r\in J |\ominus(f (r))\oplus g(r)\notin V\}= \mu(\bigcup_{i\in P}[b_i, b_{i+1})) <\varepsilon$. This shows that $\ominus f \oplus g \in O(V, \varepsilon)$,
and so $g\in f\oplus O(V, \varepsilon)$. Hence, $B_{d^\bullet}(f,\delta_0)\subseteq f\oplus O(V, \varepsilon)$. Let $\mathcal{T}_{d^\bullet}$ be the topology on $G^\bullet$ induced by $d^\bullet$ and let $\mathcal{B} = \{B_{d^\bullet}(f,\varepsilon)| f \in G^\bullet, \varepsilon > 0\}$, which is a base for $\mathcal{T}_{d^\bullet}$. Hence, each basic open set $f\oplus O(V, \varepsilon)$ is a union of elements in $\mathcal{B}$. Let $\mathcal{T}$ be the topology on $G^\bullet$. Since $d^\bullet$ is continuous with respect to $\mathcal{T}$,
for each $f\in G^\bullet$, the function $F: G^\bullet\rightarrow[0,\infty)$ defined by $F(g)= d^\bullet(f , g)$ for all $g\in G^\bullet$ is continuous
(being the restriction of $d^\bullet$ to $\{f\}\times G^\bullet)$). It follows that $B_{d^\bullet}(f,\varepsilon)= F^{-1}([0, \varepsilon))$ is open with respect to
$\mathcal{T}$. Hence, $\mathcal{T}=\mathcal{T}_{d^\bullet}$.
\end{proof}

The following theorem proves that any continuous real-valued bounded function
on a topological gyrogroup $G$ can be extended to a continuous real-valued bounded function
on $G^\bullet$.

\begin{theorem}
For the function $F: G\rightarrow R$, $G$ is a topological gyrogroup, if $F$ is continuous and bounded,
then, $F$ admits an extension to a continuous real-valued bounded function on $G^\bullet$.
\end{theorem}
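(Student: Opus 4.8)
The plan is to define the extension $F^\bullet : G^\bullet \to \mathbb{R}$ by integration against the Lebesgue measure: for each step function $f \in G^\bullet$, set
$$F^\bullet(f) = \int_J F(f(r))\, d\mu(r).$$
Since $f$ takes only finitely many values and $F$ is bounded, $F \circ f : J \to \mathbb{R}$ is a bounded step function, so the integral exists and $F^\bullet$ is well defined. For a constant function $x^\bullet = i_G(x)$ one has $F^\bullet(x^\bullet) = \int_J F(x)\, d\mu = F(x)$, so $F^\bullet \circ i_G = F$; identifying $G$ with $i_G(G)$ via Theorem \ref{the2}, this shows that $F^\bullet$ genuinely extends $F$. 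Boundedness is immediate: if $|F| \le M$ on $G$, then $|F^\bullet(f)| \le \int_J |F(f(r))|\, d\mu \le M$ for every $f$.

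The remaining and main task is the continuity of $F^\bullet$, which I would establish at an arbitrary $f \in G^\bullet$ following the scheme already used for the pseudometric extension. Fix an open base $\mathcal{N}(e)$ at $e$ and let $\varepsilon > 0$; write $f(J) = \{z_1, \dots, z_n\}$. Using the continuity of $F$ at each $z_i$, choose an open neighborhood $V_i$ of $z_i$ with $|F(y) - F(z_i)| < \varepsilon/2$ for all $y \in V_i$. Each $\ominus z_i \oplus V_i$ is an open neighborhood of $e$, so there is $V \in \mathcal{N}(e)$ with $V \subseteq \bigcap_{i=1}^n (\ominus z_i \oplus V_i)$; by the left cancellation law (Theorem \ref{The1}(1)) this yields $z_i \oplus V \subseteq V_i$, hence $|F(z_i \oplus v) - F(z_i)| < \varepsilon/2$ for every $v \in V$ and every $i$.

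Now take any $g$ in the basic neighborhood $f \oplus O(V, \delta)$, where $\delta > 0$ is to be fixed. Then $g = f \oplus h$ with $h \in O(V, \delta)$, so $g(r) = f(r) \oplus h(r)$ pointwise and $\mu(\{r \in J : h(r) \notin V\}) < \delta$. Splitting $J$ into the set where $h(r) \in V$ and its complement, I would estimate
$$|F^\bullet(g) - F^\bullet(f)| \le \int_J |F(f(r) \oplus h(r)) - F(f(r))|\, d\mu(r).$$
On the first set $f(r) \oplus h(r) \in V_i$ for the index $i$ with $z_i = f(r)$, so the integrand is below $\varepsilon/2$; on the second set, of measure less than $\delta$, the integrand is at most $2M$. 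Hence $|F^\bullet(g) - F^\bullet(f)| < \varepsilon/2 + 2M\delta$, and choosing $\delta = \varepsilon/(4M)$ (the case $M = 0$ being trivial, as then $F \equiv 0$) gives $|F^\bullet(g) - F^\bullet(f)| < \varepsilon$. This proves that $F^\bullet$ is continuous at $f$, and since $f$ is arbitrary, $F^\bullet$ is continuous.

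The main obstacle is precisely this continuity step: one must simultaneously control $F$ on the large-measure set, where the continuity of $F$ together with the careful choice of a \emph{single} base neighborhood $V$ common to all finitely many values of $f$ keeps the integrand small, and on the small-measure set, where only the uniform bound $M$ is available and the measure estimate supplied by $O(V, \delta)$ must absorb the factor $2M$. Balancing these two contributions through the choice of $\delta$ is the crux of the argument; by contrast, the well-definedness, the extension property, and the boundedness of $F^\bullet$ are routine once the integral definition is adopted.
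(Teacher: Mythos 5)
Your proposal is correct and follows essentially the same route as the paper: the extension is the measure-weighted average of $F$ over the step function's values, and continuity is proved by splitting $J$ into the large-measure set where a single base neighborhood $V$ (obtained by intersecting the translated neighborhoods $\ominus z_i\oplus V_i$) controls $F$ via continuity, and the small-measure exceptional set where only the bound $M$ is used. If anything, your choice $\delta=\varepsilon/(4M)$ is more careful than the paper's fixed radius $\varepsilon/4$, which tacitly needs the bound on $F$ to be absorbed in the $k\in L$ term.
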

\begin{proof}
Let $G$ be a topological gyrogroup with open base $\mathcal{N}(e)$ at $e$ and let $F : G \rightarrow R$ be a
continuous and bounded function.
For each
$g\in G^\bullet$, there is a partition $\{ a_0, a_1, ...,a_n\}$ of $J$ such that $g$ is constant on each interval $[a_k, a_{k+1})$.
Define a function $F^\bullet: G^\bullet\rightarrow R$ by
$$F^\bullet(g)=\sum_{k=0}^{n-1}(a_{k+1}-a_k)F(g(a_k)).$$
It is easy to see $F$ is well defined and bounded.
Let $g\in G^\bullet$ and $\varepsilon>0$.
Since $F$ is continuous, there is a set $V\in \mathcal{N}(e)$ such that
$F(g(r)\oplus V)\subseteq (F(g(r))-\frac{\varepsilon}{2}, F(g(r))+\frac{\varepsilon}{2})$ for $r\in J.$

Let $f\in O(V, \frac{\varepsilon}{4})$. There is a partition $\{b_0, b_1, ...,b_n\}$ of $J$ such that $g, f$ are constant on each interval $[b_k, b_{k+1})$.
Set $L=\{k\in\{0, 1, ..., m -1\}| f(b_k)\notin V\}$. It follows that
$\{r\in J|f(r)\notin V\}=\cup_{k\in L}[b_k, b_{k+1})$. Since $f\in O(V,\frac{\varepsilon}{4})$,
$\sum_{k\in L}(b_{k+1}-b_k) = \mu(\{r \in J |f (r)\notin V\})<\frac{\varepsilon}{4}$ .
Now, consider
\begin{align*}
\\&|F^\bullet(g)-F^\bullet(g\oplus f)|=|\sum_{k=0}^{m-1}(b_{k+1}-b_k)(F(g(b_k))-F((g\oplus f)(b_k)))|  \quad
\\& \leq|\sum_{k\in L}(b_{k+1}-b_k)(F(g(b_k))-F((g\oplus f)(b_k)))|+|\sum_{k\notin L}(b_{k+1}-b_k)(F(g(b_k))-F((g\oplus f)(b_k)))| \quad
\\&\leq\sum_{k\in L}(b_{k+1}-b_k)|(F(g(b_k))-F((g\oplus f)(b_k)))|+\sum_{k\notin L}(b_{k+1}-b_k)|(F(g(b_k))-F((g\oplus f)(b_k)))| \quad
\\&<\frac{\varepsilon}{2}+\frac{\varepsilon}{2}=\varepsilon. \quad\quad
\end{align*}
Thus, $F^\bullet(g\oplus O(V, \frac{\varepsilon}{4}))\subseteq (F^\bullet(g)-\varepsilon, F^\bullet(g)+\varepsilon)$.
It follows $F^\bullet$ is continuous at $g$.
\end{proof}

\begin{theorem}
Let $G$ be a topological gyrogroup with open base $\mathcal{U}$ at $e$. $G$ is first countable if and only if $G^\bullet$ is first countable.
\end{theorem}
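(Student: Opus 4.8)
The plan is to exploit the homogeneity of topological gyrogroups so as to reduce first countability to the existence of a countable neighborhood base at the identity, and then to treat the two implications separately. Since $G^\bullet$ is a topological gyrogroup, each left translation $L_f(h)=f\oplus h$ is a homeomorphism (its inverse is $L_{\ominus f}$ by the left cancellation law), so $G^\bullet$ is a homogeneous space and it suffices to produce a countable base at $e^\bullet$; likewise $G$ is homogeneous, so I only need a countable base at $e$.

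For the direction that first countability of $G^\bullet$ forces first countability of $G$: by Theorem \ref{the2} the map $i_G:G\rightarrow G^\bullet$ is a topological embedding, so $G$ is homeomorphic to the subspace $i_G(G)$ of $G^\bullet$. First countability is hereditary, hence a countable base at $i_G(e)=e^\bullet$ in $G^\bullet$ restricts to a countable base at $e^\bullet$ in $i_G(G)$, and transporting it back through the homeomorphism $i_G$ yields a countable base at $e$ in $G$. Homogeneity of $G$ then gives first countability of $G$.

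For the converse, suppose $G$ is first countable and fix a countable open base $\{U_n:n\in\mathbb{N}\}$ at $e$ in $G$. I claim the countable family $\{O(U_n,1/m):n,m\in\mathbb{N}\}$ is a neighborhood base at $e^\bullet$. By Proposition \ref{the4.5} the topology on $G^\bullet$ is independent of the chosen open base of $G$, so every basic neighborhood of $e^\bullet$ may be taken in the form $O(V,\varepsilon)$ with $V$ open at $e$ and $\varepsilon>0$. Given such $O(V,\varepsilon)$, choose $n$ with $U_n\subseteq V$ and $m$ with $1/m<\varepsilon$. The two monotonicity relations recorded in the verification of condition (4) give $O(U_n,1/m)\subseteq O(V,1/m)\subseteq O(V,\varepsilon)$: indeed $U_n\subseteq V$ forces $\{r\in J:f(r)\notin V\}\subseteq\{r\in J:f(r)\notin U_n\}$, whence $O(U_n,\delta)\subseteq O(V,\delta)$, and lowering the measure threshold only shrinks the set. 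Thus the countable family is cofinal below the standard base at $e^\bullet$, and homogeneity of $G^\bullet$ completes the argument.

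The argument is essentially bookkeeping; the only point demanding care is the forward direction, where I must justify that restricting to the countable set of radii $\{1/m\}$ together with the countable base $\{U_n\}$ genuinely captures every basic neighborhood $O(V,\varepsilon)$. This rests entirely on the two inclusions $O(U,\varepsilon)\subseteq O(V,\varepsilon)$ for $U\subseteq V$ and $O(U,\varepsilon)\subseteq O(U,\delta)$ for $\varepsilon\le\delta$, both already available from the earlier work, and on Proposition \ref{the4.5} to guarantee base-independence of the topology. Once these are in hand there is no genuine obstacle, so I expect the entire proof to be short.
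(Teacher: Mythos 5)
Your proof is correct and follows essentially the same route as the paper: the embedding $i_G$ (Theorem \ref{the2}) plus heredity of first countability for one direction, and the countable family $\{O(U_n,1/m)\}$ together with homogeneity for the other. You merely spell out the monotonicity inclusions and the base-independence (Proposition \ref{the4.5}) that the paper dismisses as obvious.
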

\begin{proof}
Suppose that $G$ is first countable. Then, there is a countable open base $\mathcal{U}'\subseteq\mathcal{U}$.
It is obvious that
the family $\{O(V, \frac{1}{n})|V\in\mathcal{U}',n\in\mathbb{N}\}$ is a countable open base at $e^\bullet$. Therefore, $G^\bullet$ is first
countable.

Conversely, let $G^\bullet$ is first countable.
By Proposition \ref{Pro1} and Theorem \ref{the2}, $G$ is topological embed into $G^\bullet$, so it is obvious that $G$ is first countable.
\end{proof}

\end{document}